 \newtheorem{theorem}{Theorem}[section]
\newtheorem{corollary}[theorem]{Corollary}
\newtheorem{lemma}[theorem]{Lemma}
\newtheorem{proposition}[theorem]{Proposition}
\theoremstyle{remark}
\newtheorem{remark}[theorem]{Remark}
\theoremstyle{definition}
\numberwithin{equation}{section}
\begin{document}

\title[The anisotropic $N$-Liouville equation]{Classification of solutions to the anisotropic $N$-Liouville equation in $\mathbb{R}^N$}

\author[G. Ciraolo]{Giulio Ciraolo}
\address[G. Ciraolo]{Department of Mathematics ``Federigo Enriques"\\
Universit\`a degli Studi di Milano\\ Via Cesare Saldini 50, 20133 Milan\\
Italy}
\email{giulio.ciraolo@unimi.it}

\author[X. Li]{Xiaoliang Li}
\address[X. Li]{Department of Mathematics ``Federigo Enriques"\\
	Universit\`a degli Studi di Milano\\ Via Cesare Saldini 50, 20133 Milan\\
	Italy}
\email{xiaoliang.li@unimi.it}

\thanks{}
\subjclass{35J92, 35B06, 35B08, 35B40, 35B53}

\keywords{Liouville equation; Quasilinear anisotropic elliptic equations; Finsler $N$-Laplacian; Logarithmic singularity}

\begin{abstract}
Given $N\geq 2$, we completely classify solutions to the anisotropic $N$-Liouville equation $$-\Delta_N^H\,u=e^u \quad\text{in }\mathbb{R}^N,$$ under the finite mass condition $\int_{\mathbb{R}^N} e^u\,dx<+\infty$. Here $\Delta_N^H$ is the so-called Finsler $N$-Laplacian induced by a positively homogeneous function $H$. As a consequence in the planar case $N=2$, we give an affirmative answer to a conjecture made in [G. Wang and C. Xia, J. Differential Equations 252 (2012) 1668--1700].
\end{abstract}

\maketitle

\section{Introduction}
A classical problem in the study of qualitative properties of elliptic PDEs is the classification of solutions to the Liouville equation
\begin{equation} \label{eq_L}
-\Delta u = e^u \quad \textmd{in } \mathbb{R}^2.
\end{equation}
This study was initiated by Liouville \cite{Liouville1853}, who found that solutions to \eqref{eq_L} can be represented by analytic functions on the complex plane $\mathbb{C}$. Several years later, Chen and Li \cite{CL1991} proved by the method of moving planes that, under the finite mass condition $$\int_{\mathbb{R}^2} e^u\,dx<+\infty,$$ solutions to \eqref{eq_L} can be completely classified and they are given by a family of radially symmetric functions
$$
u(x)=\log \frac{8 \lambda^2}{(1+\lambda^2 |x-x_0|^2)^2} 
$$
for all $\lambda>0$ and $x_0 \in\mathbb R^2$.

This result has been subsequently revisited by using different techniques (see \cite{CW1994,CK1995}), and recently, its natural generalization to higher dimensions has also been tackled in \cite{Esposito2018}, where the author classified all finite-mass solutions to the equation
\begin{equation} \label{eq_LN}
-\Delta_N u = e^u \quad \textmd{in } \mathbb{R}^N.
\end{equation}
 Here, $\Delta_Nu=\mathrm{div}\,(|\nabla u|^{N-2}\nabla u)$ denotes the $N$-Laplacian operator and $N\geq 2$ is an integer. We will come back to these results later in the Introduction.

From the variational point of view, it is known that the operator $\Delta_N$ arises as first variation of the functional 
$$
\int |\nabla u|^N\,dx \,,
$$
which is conformally invariant \cite{Loe}.
This suggests that, when investigating qualitative properties of solutions to problems of type \eqref{eq_LN}, various techniques which exploit the isotropy of $\mathbb R^N$ endowed with the Euclidean norm $|\cdot|$ and/or the conformal invariance of $\Delta_N$ can be used, such as the method of moving planes/spheres, Kelvin transform and so on; see \cite{CL1991,CL1993,CK1994,PT2001,Esposito2018} for instance. These techniques, however, seem to be not suitable whenever the problems are considered in a possibly anisotropic medium. Here the term ``anisotropic" stands for the non-Euclidean way in which the ambient space $\mathbb R^N$ is measured, and it is encoded into a norm $H$ (possibly \emph{asymmetric})\footnote{As precisely explained right after here, a norm we say in this paper always refers to a positively homogeneous and convex function which is not necessarily centrally symmetric, i.e. $H(\xi)\neq H(-\xi)$ in general. This can be viewed as a generalization of the usual concept of norm.}, i.e. $H$ is a convex function on $\mathbb{R}^N$ which is \emph{positively} homogeneous of degree one and satisfies
$$
H(\xi)>0\quad\text{for any }\xi\in\mathbb{R}^{N}\setminus\{0\}.
$$ 
We notice that $\mathbb{R}^N$ endowed with the norm $H$ can be viewed as a Finsler manifold, which reflects into a rich geometric structure to explore since it breaks the usual symmetry properties coming from the peculiarities of the Euclidean norm, like the rotational invariance.

Thus, it is natural to understand whether the Finsler setting preserves some kind of symmetry to be characterized and what kind of behaviors may result for solutions of equations in this setting. More precisely, in the case of Liouville equation \eqref{eq_LN} it is interesting to consider by generalizing the operator to the \emph{Finsler $N$-Laplacian} $\Delta_N^H$, which is given below by \eqref{eq:def-F-p-Lap} with $p=N$ and naturally appears in the Euler--Lagrange equation for the Wulff-type functional $$\int H^N(\nabla u)\,dx$$
where the gradient of a function $u$ is measured by the norm $H$.

Such a generalization to anisotropic (or Finsler) PDEs is also of interest in its own right, as it is motivated by concrete applications in many fields, such as in the study of capacity and sharp inequalities of geometric-functional type, in blowup analysis, in digital image processing, crystalline mean curvature flow and crystalline fracture theory. These studies have attracted a growing attention in the past decades; see for instance \cite{BC2018,CFR2020,CRS2016,FMP2010,FM1991,Taylor1978,DP1992,BFK2003,WX2012,ZZ2019} and the references therein.

 However, this generalization is sometimes challenging, since one has to abandon typical arguments which apply only in the Euclidean setting and find new ways to handle. As an example, in the recent paper \cite{CFR2020} the first author, Figalli and Roncoroni studied the critical anisotropic $p$-Laplace equation 
\begin{equation}\label{in-eq:critical}
	-\Delta_p^H u=u^{\frac{Np}{N-p}-1}\quad\text{in } \mathbb{R}^N,
\end{equation}
where $1<p<N$ and $\Delta_p^H$ is the \emph{Finsler $p$-Laplacian} induced by $H$ 
\begin{equation}\label{eq:def-F-p-Lap}
	\Delta_p^H u=\mathrm{div}\left(H^{p-1}(\nabla u)\nabla H(\nabla u)\right).
\end{equation}
 Under the assumption that
\begin{equation}\label{in-eq:H2}
	H^2 \text{ is uniformly convex}\,\footnote{ That is, $\exists\, \lambda>0$, s.t. $D^2(H^2)(\xi)\ge \lambda\,\mathrm{Id}\quad \forall\xi\in\mathbb{R}^N\setminus\{0\}$.},
\end{equation}
the authors in \cite{CFR2020} classified all positive {$\mathcal{D}^{1,p}$}-solutions to \eqref{in-eq:critical},
extending previous works \cite{CGS1989,CL1991,DMMS2014,Vet2016,Sci2016} dealing with \eqref{in-eq:critical} in the Euclidean case, i.e. when $H$ is the Euclidean norm $|\cdot|$. 

In the present paper, we are interested in the anisotropic $N$-Liouville equation, which is seen as a counterpart of \eqref{in-eq:critical} in the case $p=N$ as well as the anisotropic counterpart of \eqref{eq_LN}. More precisely, we consider the following problem
\begin{equation}\label{in-eq:Lio}
\begin{cases}
-\Delta_N^H \,u=e^u &\text{in }\mathbb{R}^N\\
\int_{\mathbb{R}^N}e^u\,dx<+\infty.
\end{cases}
\end{equation}
This model stems from the blow-up analysis of problems with exponential nonlinearities on bounded domains, and it describes the limiting profile of non-compact solution-sequences near a blow-up point; see for instance \cite{WX2012,XG2016,ZZ2019,BM1991,LS1994,AP1997,EM2015}.

The aim of this paper is to address the classification of solutions to \eqref{in-eq:Lio}. Here a solution of  \eqref{in-eq:Lio} stands for a function $u\in W_{\mathrm{loc}}^{1,N}(\mathbb{R}^N)$ such that $e^u\in L^1(\mathbb{R}^N)$ and $$\int_{\mathbb{R}^N}H^{N-1}(\nabla u)\left<\nabla H(\nabla u),\nabla\varphi\right>dx=\int_{\mathbb{R}^N}e^u\varphi\,dx$$
for all $\varphi\in W_0^{1,N}(\Omega)\cap L^\infty(\Omega)$ with $\Omega\subset\mathbb{R}^N$ bounded.  
 
Assuming \eqref{in-eq:H2}, we are able to provide a complete classification result for problem \eqref{in-eq:Lio}. More precisely, we prove that there is a family of explicit logarithmic functions exhausting all possible solutions to \eqref{in-eq:Lio}. In particular, due to scaling and translation invariance, when $N=2$ this affirmatively answers a conjecture proposed by Wang--Xia \cite{WX2012} on the uniqueness of solutions to \eqref{in-eq:Lio} in $\mathbb{R}^2$. 

To be more specific, we set
\begin{equation}\label{in-eq:def-hat-H0}
	\hat{H}_0(x)=H_0(-x)
\end{equation}
for $x\in\mathbb{R}^N$, where $H_0$ is the dual function of $H$ defined by
\begin{equation*}
H_0(x):=\sup_{\xi\in\mathbb{S}^{N-1}}\frac{\left<x,\xi\right>}{H(\xi)}.
\end{equation*}
Our main result is the following.

\begin{theorem}\label{thm:main}
Let $u$ be a solution of \eqref{in-eq:Lio} and assume that $H\in C^2(\mathbb{R}^N\setminus\{0\})$ satisfies \eqref{in-eq:H2}. Then 
\begin{equation}\label{in-eq:u}
u(x)=\log\frac{N\left(\frac{N^2}{N-1}\right)^{N-1}\lambda^N}{\left[1+\lambda^{\frac{N}{N-1}}\hat{H}_0^{\frac{N}{N-1}}(x-x_0)\right]^N}
\end{equation}
for some $\lambda>0$ and $x_0\in\mathbb{R}^N$.
\end{theorem}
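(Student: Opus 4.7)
The plan is to adapt the approach of Ciraolo--Figalli--Roncoroni \cite{CFR2020}, developed for the anisotropic critical $p$-equation, to the conformal exponent $p=N$ with exponential nonlinearity. The standard Euclidean tools (moving planes, Kelvin transform, or complex analysis in dimension two) are unavailable once the Euclidean symmetry is broken by a general Minkowski norm $H$, so the argument has to be based on integral identities and an anisotropic $P$-function.

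The first stage is the analysis of the asymptotic behaviour of $u$ at infinity. Standard quasilinear elliptic regularity together with the $L^1$ bound on $e^u$ yields $u\in C^{1,\alpha}_{\mathrm{loc}}(\mathbb{R}^N)$ and $\sup_{\mathbb{R}^N}u<\infty$. Comparing $u$ with the anisotropic fundamental solution of $-\Delta_N^H$, which is proportional to $-\log\hat{H}_0$ (a direct computation based on the Legendre duality gives $-\Delta_N^H(-\log\hat{H}_0)=N\kappa_N\,\delta_0$, with $\kappa_N:=|\{\hat{H}_0<1\}|$), and integrating the equation on Wulff balls $\{H_0(x)<R\}$, I would derive the sharp asymptotic
\[
u(x)+\gamma\log\hat{H}_0(x)\to c_0\quad\text{as }\hat{H}_0(x)\to\infty,\qquad \gamma=\left(\frac{1}{N\kappa_N}\int_{\mathbb{R}^N}e^u\,dx\right)^{1/(N-1)},
\]
together with a matching gradient bound $\hat{H}_0(x)\,H(\nabla u(x))\to\gamma$. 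A Pohozaev-type identity obtained by testing the equation against $\langle x,\nabla u\rangle$ on Wulff balls and passing to the limit (the asymptotic bounds justifying the vanishing of the boundary terms) then pins down the total mass,
\[
\int_{\mathbb{R}^N}e^u\,dx=N\kappa_N\Bigl(\frac{N^2}{N-1}\Bigr)^{N-1},\qquad \gamma=\frac{N^2}{N-1},
\]
which are exactly the values dictated by the candidate profile \eqref{in-eq:u}.

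The second and core stage is a rigidity argument. I would perform the change of variable $v:=e^{-u/N}$, which converts \eqref{in-eq:Lio} into an inhomogeneous quasilinear equation for $v$ closely analogous to the critical equation treated in \cite{CFR2020}, and then introduce an auxiliary quantity $P$ combining $\widetilde{H}^N(\nabla v)$ and suitable powers of $v$ (with $\widetilde{H}(\xi):=H(-\xi)$), tuned so as to vanish identically on the explicit profile. Using a Bochner-type identity for the Finsler $N$-Laplacian together with the uniform convexity of $H^2$ from \eqref{in-eq:H2}, the aim is to prove that $P$ is a subsolution of the linearisation of $\Delta_N^{\widetilde{H}}$ at $v$. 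The asymptotic estimates of the first stage force $P\to 0$ at infinity, so a maximum-principle argument yields $P\equiv 0$. Substituting this pointwise identity back into the equation, one finds that the level sets of $u$ are homothetic Wulff shapes centred at a unique minimum point $x_0$, and integration of the resulting first-order ODE along the Finsler gradient flow of $u$ reconstructs the formula \eqref{in-eq:u}, the parameters $\lambda$ and $x_0$ arising from the scaling and translation invariance of the problem.

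The principal obstacle is the rigidity stage. The operator $\Delta_N^H$ degenerates on $\{\nabla u=0\}$, so the Bochner-type identity has to be established first on the regular set and then extended to all of $\mathbb{R}^N$ by a delicate approximation argument exploiting the $C^{1,1}$-regularity of $H^2$ encoded in \eqref{in-eq:H2}. Moreover, the decay of $P$ required to run the maximum principle is strictly sharper than the pointwise decay of $u$, and needs refined gradient (and possibly second-order) asymptotics at infinity; these in turn depend on the sharp Pohozaev identity, whose justification on unbounded Finsler domains itself rests on the decay estimates. Thus the asymptotic and rigidity stages are coupled and must be carried out by a bootstrap rather than in strict sequence.
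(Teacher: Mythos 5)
Your first stage (asymptotics at infinity followed by a Pohozaev identity to pin down $\int e^u$) essentially matches what the paper does in Sections~\ref{sec:asym}--\ref{sec:quant}, although the paper does not compare $u$ directly with the fundamental solution $-\log\hat H_0$: instead it writes $u=v_\infty+(u-v_\infty)$ on an exterior domain with $v_\infty$ an $\Delta_N^H$-harmonic function and a bounded remainder, and then extracts the logarithmic singularity of $v_\infty$ from Serrin's results on isolated singularities together with a blow-down argument that gives the sharp gradient estimate \eqref{asym-eq:prop-asym-Du}. This is more than a cosmetic change: it is the mechanism that makes the gradient asymptotics rigorous without any Kelvin-type conformal device, which is unavailable in the Finsler setting.

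Your second stage is where you genuinely diverge from the paper, and it is also where your plan has a real gap. The paper does \emph{not} run a $P$-function/Bochner argument in the spirit of \cite{CFR2020}; the authors explicitly state that their rigidity step is different in nature. What they actually do is exploit the quantization $\int_{\mathbb{R}^N}e^u\,dx=N\bigl(\tfrac{N^2}{N-1}\bigr)^{N-1}\mathcal L^N(B_1^{\hat H_0})$ to force equality in the chain of inequalities \eqref{pre-eq:Hol-ineq}--\eqref{pre-eq:iso-ineq} used in Lemma~\ref{pre-lem:eu-lower}. Equality in the anisotropic isoperimetric inequality identifies each superlevel set $\Omega_t$ as a Wulff ball $B_{R(t)}^{\hat H_0}(x(t))$; equality in H\"older implies $H(\nabla u)$ is constant on each $\partial\Omega_t$; the Pohozaev identity applied on each $\Omega_t$ yields an explicit ODE for $R(t)$, and a separate elementary argument shows the centers $x(t)$ cannot move. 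This route needs only the first-order asymptotics you already established, and it sidesteps the degeneracy on $\{\nabla u=0\}$ entirely (the level-set analysis uses only that this set has measure zero, which is supplied by \cite{ACF2021}).

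By contrast, your proposed rigidity argument is not carried to completion, and the obstacles you flag yourself are substantive, not cosmetic. You would need (i) a second-order Bochner-type identity for $\Delta_N^H$ in the conformal case $p=N$, established through the degenerate set $\{\nabla u=0\}$; (ii) second-order decay of $u$ at infinity strong enough to make the $P$-function tend to zero, which the first-order expansion \eqref{asym-eq:prop-asym-Du} does not provide and which is not proved in the paper either; and (iii) a maximum-principle statement for the linearisation of $\Delta_N^{\hat H}$ on all of $\mathbb{R}^N$ including the degenerate set. Items (ii) and (iii) in particular are not routine for $p=N$, where the scaling structure differs from the $1<p<N$ case treated in \cite{CFR2020}, and your note that ``the asymptotic and rigidity stages are coupled and must be carried out by a bootstrap'' is precisely the acknowledgment that the argument, as sketched, does not close. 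The paper's isoperimetric route avoids this circularity, which is exactly why it is the one chosen.
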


We notice that Theorem \ref{thm:main} is already available in literature when $H$ is the Euclidean norm, as mentioned at the beginning of the Introduction. Indeed, if $H(\xi)=|\xi|$ and $N=2$ then the equation in \eqref{in-eq:Lio} reduces to \eqref{eq_L} and Theorem \ref{thm:main} for \eqref{eq_L} was first proved by Chen--Li \cite{CL1991} via the method of moving planes. Other two proofs in this semilinear case were given later by Chou--Wan \cite{CW1994} through complex analysis, and by Chanillo--Kiessling \cite{CK1995} by using a Pohozaev identity combined with the isoperimetric inequality, respectively. In the general case of the $N$-Laplacian, with $N\geq2$, problem \eqref{in-eq:Lio} was recently treated by Esposito \cite{Esposito2018}, who completely proved Theorem \ref{thm:main} in the Euclidean case by exploiting the Kelvin transform.

 We also point out that, when $N=2$ and $H$ is a centrally symmetric norm over $\mathbb{R}^2$, the result in Theorem \ref{thm:main} was conjectured in  \cite[Conjecture 6.1]{WX2012}.

Theorem \ref{thm:main} extends the classification result in \cite{Esposito2018} into a Finsler setting. It was reasonable to expect that this extension is not trivial, as suggested by the conjecture mentioned above, and as might be also foreseen from the discussions in \cite{CFR2020} for critical equations \eqref{in-eq:critical} as well as from other related studies in this spirit (see for instance \cite{BC2018,CS2009,CS2023,FMP2010,CRS2016,WX2011}).

Indeed, for problem \eqref{in-eq:Lio} with $H$ possibly non-Euclidean, it would be quite a delicate issue to derive the asymptotic behavior of the solutions at infinity, as previous techniques used in relevant literature are not applicable. Specifically, in order to gain the desired asymptotic property of solutions to \eqref{in-eq:Lio} when $H(\xi)=|\xi|$, the authors in \cite{Esposito2018} and \cite{CL1991,CK1995} made use of the Kelvin transform and the Green representation formula for \eqref{eq_L},  respectively. However, \emph{neither} of them are available in our setting. Regarding \cite{CFR2020}, the derivation of decay estimates for solutions of \eqref{in-eq:critical} followed the idea in V\'etois \cite{Vet2016}, using a scaling argument together with a doubling property from \cite{PPS2007}. Such an approach was originally developed by Pol\'a\v{c}ik--Quittner--Souplet \cite{PPS2007,PPS2007p} to obtain power-law singularity estimates for problems with subcritical Sobolev growth, but it does not seem to be effective for Liouville type equations whose solutions with finite mass have logarithmic singularities instead (see \cite{Esposito2021}). Therefore, in order to tackle that issue, we were led to seek a general and non-conformal way of determining the asymptotics at infinity for solutions to problems of type \eqref{in-eq:Lio}, which applies to the anisotropic framework.

\subsection{Derivation of asymptotic behavior at infinity}\label{sec:derivation}
Let us first go back to the idea of Esposito \cite{Esposito2018} dealing with \eqref{in-eq:Lio} in the Euclidean case (see also \cite{Esposito2021}) in order to show a contrast with the approach we shall develop here. 

In \cite{Esposito2018,Esposito2021}, thanks to the conformal invariance of the $N$-Laplacian, Esposito performed the inversion $\hat{u}$ of a solution $u$ to \eqref{eq_LN} (i.e. $\hat{u}(x)=u(\frac{x}{|x|^2})$), and deduced the asymptotic behavior of $u$ at infinity by analyzing the singularity of $\hat{u}$ at the origin. The crucial point there consists in exploiting the equation satisfied by $\hat{u}$ and showing that, on a punctured ball $B_r\setminus\{0\}$, $\hat{u}$ can be decomposed into two parts:
\begin{equation}\label{in-eq:decom-hat-u}
	 \hat{u}=v+w
\end{equation}
where $v$ is a $N$-harmonic function and $w\in L^\infty(B_r)$. Then according to \cite{KV1986}, it was concluded in \cite{Esposito2018,Esposito2021} that $v$ completely characterizes the logarithmic behavior of $\hat{u}$ at $0$, given by
\begin{equation}\label{in-eq:v-log}
	v-\bar{\gamma}\log |x|\in L^\infty(B_r)
\end{equation}
for some constant $\bar{\gamma}$.

Turning to the Finsler case, a Kelvin-type transform as making the inversion map is not available except for $H$ taking a special form \cite{ERSV2023}. Moreover, it seems hard to derive a logarithmic estimate of type \eqref{in-eq:v-log} for solution $u$ near infinity by carrying out a dual adaptation of the idea \eqref{in-eq:decom-hat-u} in exterior domains. Indeed, the techniques validating \eqref{in-eq:decom-hat-u} in \cite{Esposito2018,Esposito2021} are based on several global estimates of integral type over bounded domains (with bounds depending on the measure of the domain), such as Brezis--Merle type inequality as in Lemma \ref{pre-lem:priori-exp}. In our case, since we have to deal with the solution in a neighborhood of infinity, this approach seems to be not suitable to get appropriate uniform estimates about $u$ near infinity and employ approximation arguments as done in \cite{Esposito2018,Esposito2021} for $\hat{u}$ near the origin.

Instead, here we tackle problem \eqref{in-eq:Lio} by establishing a result on the determination of logarithmic singularity at infinity for exterior solutions to equations of the general form
\begin{equation}\label{in-eq:div-A-f}
	\mathrm{div}\,\mathcal{A}(\nabla u)=f(x)
\end{equation}
where $\mathcal{A}$ and $f$ are given functions satisfying suitable structure conditions. This result is made explicitly in Theorem \ref{sing-thm:log-A-f} and it is based on Serrin's works \cite{Serrin1964,Serrin1965-1,Serrin1965}, which generalizes the corresponding discussions in \cite{Serrin1965} for a class of homogeneous equations. Thanks to Theorem \ref{sing-thm:log-A-f} applied to the case of \eqref{in-eq:Lio}, we are able to characterize the following logarithmic behavior for a solution $u$ of \eqref{in-eq:Lio}
\begin{equation}\label{in-eq:u-log}
	u(x)+\gamma\log\hat{H}_0(x):=G(x)\in L^\infty(\mathbb{R}^N\setminus B_1)
\end{equation}
with $\gamma$ being a constant.

More precisely, \eqref{in-eq:u-log} is validated in the following way. First, from the scaling invariance of \eqref{in-eq:Lio} we obtain that the solution $u$ blows down at infinity with a non-sharp upper bound \eqref{pre-eq:u-upper}. Then an application of Theorem \ref{sing-thm:log-A-f} to \eqref{in-eq:Lio} permits us to find also a preliminary lower estimate for $u$ near infinity. With these, we introduce $\gamma$ as in \eqref{asym-eq:def-gamma} and obtain by exploiting a scaling argument that the function $G$ in \eqref{in-eq:u-log} is bounded from below. In order to show that $G$ is also bounded from above, we construct a family of functions with a uniform behavior of type \eqref{in-eq:u-log} and deduce that they are supersolutions of equation \eqref{in-eq:Lio} outside a large ball. Accordingly, we conclude \eqref{in-eq:u-log} via the comparison principle.

Once \eqref{in-eq:u-log} is shown, we can further derive the asymptotic expansion of $\nabla u$ at infinity in terms of a scaling argument and determine the value of $\gamma$ by the divergence theorem; see Proposition \ref{asym-prop:asym-u-Du}. 


\subsection{Outline of the proof of Theorem \ref{thm:main}}
Let $u$ be a solution to \eqref{in-eq:Lio}. We first determine the precise asymptotic behavior of $u$ at infinity, as well as the one for $\nabla u$. Then we prove Theorem \ref{thm:main} along the same lines as in \cite{CK1995,Esposito2018}, i.e. through a Pohozaev type identity and an isoperimetric argument. More precisely, the proof can be outlined as follows.

Thanks to Proposition \ref{asym-prop:asym-u-Du}, we can apply a Pohozaev type identity derived in Lemma \ref{quant-lem:Pohozaev} to establish a quantization result, which gives the value of $\|e^u\|_{L^1(\mathbb{R}^N)}$ (see Proposition \ref{quan-pro}). This result yields that all the integral inequalities appearing in estimating $\|e^u\|_{L^1(\mathbb{R}^N)}$ from below (i.e. Lemma \ref{pre-lem:eu-lower}) become equalities. In particular, it turns out that each level set of $u$ satisfies the equality case of the anisotropic isoperimetric inequality. Hence, we deduce from the Wulff theorem (see for instance \cite{Wulff1901,Taylor1978,FM1991,FMP2010,CRS2016}) that the level sets of $u$ are Wulff shapes associated with $\hat{H}_0$, as defined in \eqref{in-eq:def-Wulff} below. Furthermore, we demonstrate that these Wulff shapes are concentric (they have the same center). With such symmetry and rigidity, we finally classify the representation \eqref{in-eq:u} for $u$ via an ODE argument.

This approach differs from the one exploited in \cite{CFR2020}, where positive solutions to \eqref{in-eq:critical} are classified by combining several integral identities with a Newton type inequality (see also \cite{BC2018,BNST2008,CS2009,Wein1971} for related techniques applied to Serrin's rigidity problems). Instead, we obtain the classification of solutions to \eqref{in-eq:Lio} from the isoperimetric inequality and our approach is more in the spirit of \cite{Ser2013,DPV2021,BFK2003,KP1994,Lions1981} where symmetry results are proved for bounded domains, as well as of \cite{CL2022,Poggesi2019,XY2021} for exterior overdetermined problems. Moreover, we believe that our approach to derive asymptotic estimates at infinity is a major novelty of the paper and will be useful also in other settings.

\subsection{Notation and organization of the paper}
Throughout the paper, we assume that the function $H$ satisfies \eqref{in-eq:H2}. This assumption enables the Finsler $p$-Laplacian $\Delta_p^H$ with $p>1$ to be a possibly degenerate elliptic operator, such that the equations driven by $\Delta_p^H$ may have the same nature as those quasilinear elliptic equations investigated in \cite{Serrin1964,Serrin1965,Lieberman1988,Tolksdorf1984,DiBen1983}. Actually, according to \cite[Theorem 1.5]{CFV2016}, \eqref{in-eq:H2} is equivalent to that $H$ is \emph{uniformly elliptic}, i.e. the sublevel set $\{\xi\in\mathbb{R}^N:H(\xi)<1\}$ is uniformly convex, as we required  previously in \cite{CL2022}. In this connection, one is referred to \cite[Subsection 2.1]{CL2022} for specific properties of the function $H$ we are considering.

In the whole paper, we set 
\begin{equation}\label{in-eq:def-Wulff}
	B_r^{\hat{H}_0}(x):=\{y\in\mathbb{R}^N:\hat{H}_0(y-x)<r\}
\end{equation}
 and denote by $B_r(x)$ the Euclidean ball with radius $r$ and center $x$; if $x=0$, we would omit it and write $B_r^{\hat{H}_0}$ and $B_r$ for simplicity. In the same way, we introduce the symbol $B_r^{H_0}$ to denote the ball in the metric $H_0$. Here $\hat{H}_0$ and $H_0$ are as in \eqref{in-eq:def-hat-H0}; analogously, we define
 \begin{equation}\label{in-eq:def-hat-H}
 	\hat{H}(\xi)=H(-\xi)\quad\text{for }\xi\in\mathbb{R}^N.
 \end{equation}
 Clearly, $\hat{H}_0$ is the dual function of $\hat{H}$ in a sense as in \eqref{in-eq:def-hat-H0}. In addition, we let $\mathcal{L}^N$ and $\mathcal{H}^{N-1}$ stand for the $N$-dimensional Lebesgue measure and $(N-1)$-dimensional Hausdorff measure, respectively.

The rest of the paper is organized as follows. In Section \ref{sec:pre} we provide two preliminary results concerning \emph{a priori} properties of a solution $u$ to \eqref{in-eq:Lio}. The first shows that $u$ is locally of class $C^{1,\alpha}$ and is globally bounded from above by a logarithmic function. The latter gives a lower \emph{a priori} estimate on $\|e^u\|_{L^1(\mathbb{R}^N)}$. Section \ref{sec:asym} is devoted to deriving the asymptotic behavior at infinity for $u$ and $\nabla u$, which allows us to establish in Section \ref{sec:quant} a quantization result on the mass $\|e^u\|_{L^1(\mathbb{R}^N)}$ by exploiting a Pohozaev type identity. After this, we present the proof of Theorem \ref{thm:main} in Section \ref{sec:pf}. Section \ref{sec:sing} performs analysis on logarithmic singularity at infinity for solutions of the general equation \eqref{in-eq:div-A-f} and also presents several related results for the anisotropic $N$-Laplace equation, which are of independent interest.

\section{Preliminaries}\label{sec:pre}
In this section, we first show that any solution $u$ of \eqref{in-eq:Lio} belongs to $C_{\mathrm{loc}}^{1,\alpha}(\mathbb{R}^N)$ and it is bounded from above by a global logarithmic estimate, see Proposition \ref{pre-pro:global} below. Then, in Lemma \ref{pre-lem:eu-lower} we derive a lower \emph{a priori} estimate on $\|e^u\|_{L^1(\mathbb{R}^N)}$ by applying the anisotropic isoperimetric inequality. These results are inspired by those obtained in \cite{Esposito2018} when $H(\xi)=|\xi|$ and the one established in \cite{Esposito2021} for the generalized $N$-Liouville equations in $B_1\setminus\{0\}$. We mention that our argument for the estimate in Lemma \ref{pre-lem:eu-lower} simplifies the one presented in \cite{Esposito2018}, which was done by approximation; we refer to Remark \ref{pre-rk:non-weight} for further details.

\begin{proposition}\label{pre-pro:global}
Let $u$ be a solution of \eqref{in-eq:Lio}. Then $u^+\in L^\infty(\mathbb{R}^N)$ and $u\in C_{\mathrm{loc}}^{1,\alpha}(\mathbb{R}^N)$ for some $\alpha\in(0,1)$. Moreover, there exists a constant $C>0$ such that 
\begin{equation}\label{pre-eq:u-upper}
u(x)\leq C-N\log |x|\quad\text{for any }x\in\mathbb{R}^N\setminus\{0\}.
\end{equation}
\end{proposition}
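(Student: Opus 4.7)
My plan is to establish local regularity of $u$ first and then derive the global logarithmic upper bound via a scaling argument.

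\medskip

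\noindent\textbf{Step 1 (Local boundedness and $C^{1,\alpha}$ regularity).} The key technical input is a Brezis--Merle type local estimate for the Finsler $N$-Laplacian: there exists a universal threshold $\delta_0=\delta_0(N,H)>0$ such that for every $r>0$ and every weak solution $v$ of $-\Delta_N^H v=e^v$ on $B_r(y_0)$ with $\int_{B_r(y_0)} e^v\,dx<\delta_0$, one has $\sup_{B_{r/2}(y_0)} v\le C=C(N,H,r)$. The argument, in the spirit of \cite{Esposito2018,Esposito2021}, compares $v$ with the Dirichlet potential $w$ solving $-\Delta_N^H w=e^v$ in $B_r(y_0)$ with $w=0$ on $\partial B_r(y_0)$, uses a Moser--Trudinger type inequality to conclude $e^v\in L^q(B_{r/2}(y_0))$ for some $q>1$, and then invokes the Serrin--Trudinger $L^\infty$ regularity. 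Given this, the absolute continuity of $\int_{\mathbb{R}^N} e^u\,dx<+\infty$ allows one to cover any compact set by finitely many balls on each of which $\int e^u<\delta_0$, giving $u\in L^\infty_{\mathrm{loc}}(\mathbb{R}^N)$. Lieberman's theorem, whose hypotheses are guaranteed by \eqref{in-eq:H2}, then upgrades this to $u\in C^{1,\alpha}_{\mathrm{loc}}(\mathbb{R}^N)$.

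\medskip

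\noindent\textbf{Step 2 (Global logarithmic upper bound).} The $1$-homogeneity of $H$ gives the scaling covariance $\Delta_N^H(u(\lambda\,\cdot))(y)=\lambda^N(\Delta_N^H u)(\lambda y)$. For $|x_0|$ large I set
\[
v_{x_0}(y):=u\!\left(x_0+\tfrac{|x_0|}{2}\,y\right)+N\log\tfrac{|x_0|}{2},\qquad y\in\mathbb{R}^N,
\]
which still solves $-\Delta_N^H v_{x_0}=e^{v_{x_0}}$ in $\mathbb{R}^N$. A direct change of variables yields
\[
\int_{B_1(0)} e^{v_{x_0}}\,dy=\int_{B_{|x_0|/2}(x_0)} e^u\,dx\longrightarrow 0 \quad\text{as } |x_0|\to\infty,
\]
since $B_{|x_0|/2}(x_0)\subset\mathbb{R}^N\setminus B_{|x_0|/2}(0)$ and $e^u\in L^1(\mathbb{R}^N)$. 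For $|x_0|$ sufficiently large this integral is below $\delta_0$, and applying the Brezis--Merle estimate of Step~1 to $v_{x_0}$ on $B_1(0)$ gives $v_{x_0}(0)\le C$ with $C$ uniform in $x_0$. Unwinding the definition yields $u(x_0)\le C+N\log 2-N\log|x_0|$, and combining with the local upper bound on a fixed large ball from Step~1 produces \eqref{pre-eq:u-upper} after enlarging $C$. In particular $u^+\in L^\infty(\mathbb{R}^N)$.

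\medskip

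\noindent\textbf{Main obstacle.} The delicate step is the Brezis--Merle type estimate in the anisotropic setting: the classical proof hinges on the linear Green's function of the Laplacian and Jensen's inequality, neither of which is available for the quasilinear Finsler $N$-Laplacian. Replacing them requires a nonlinear comparison argument and a Moser--Trudinger inequality tailored to $H$. Once this estimate is at hand, the scaling argument in Step~2 is a clean consequence of the $N$-homogeneity of both $\Delta_N^H$ and the exponential nonlinearity.
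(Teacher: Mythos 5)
Your two-step plan is essentially the paper's own strategy: a Brezis--Merle type exponential estimate combined with Serrin's local $L^\infty$ bound gives local boundedness, and then the scaling covariance of $\Delta^H_N$ and of the exponential nonlinearity converts the finite mass into the global logarithmic decay. Your Step~2 is effectively identical to the paper's (the paper sets $u_R(x)=u(Rx)+N\log R$ and bounds $u_R^+$ uniformly on $\mathbb S^{N-1}$; you parametrize by the base point $x_0$ instead of the scale $R$, which amounts to the same thing). The absolute-continuity covering argument and the passage to $C^{1,\alpha}_{\mathrm{loc}}$ via DiBenedetto/Tolksdorf/Lieberman regularity also match.

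The one place you should tighten up is the internal sketch of the Brezis--Merle estimate in Step~1. You propose to compare $v$ with the Dirichlet potential $w$ solving $-\Delta_N^H w=e^v$, $w=0$ on $\partial B_r$. That is the right picture for the \emph{linear} Laplacian, where $v-w$ is harmonic with the same boundary trace as $v$ and hence bounded; but $\Delta_N^H$ is quasilinear and this additive decomposition of $v$ into $w$ plus an $N$-harmonic part does not hold. The paper sidesteps this by instead taking $h$ to be the $\Delta_N^H$-harmonic replacement of $u$ (that is, $\Delta_N^H h=0$, $h=u$ on $\partial B_r$) and estimating $|u-h|$ directly: the cited \cite[Prop.\ 2.1]{Esposito2018} yields $\int e^{\lambda|u-h|}\le C$ when $\|e^u\|_{L^1(B_r)}$ is small, while $h\le u$ by comparison and $\|h^+\|_{L^N(B_r)}\le\|u^+\|_{L^N(B_r)}\le (N!\int e^u)^{1/N}$ lets Serrin's estimate bound $h^+$ in $L^\infty$. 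Multiplying gives $e^u\in L^{N/(N-1)}(B_{r/2})$, and a second application of Serrin finishes. Also, what you call a ``Moser--Trudinger type inequality'' is really the Brezis--Merle exponential inequality; Moser--Trudinger controls $\int e^{c|u|^{N/(N-1)}}$ in terms of $\|\nabla u\|_{L^N}$, which is a different tool. These are fixable phrasing/attribution issues rather than a gap in the overall argument; once you replace the linear splitting $v=w+(\text{harmonic})$ by the comparison with the $N$-harmonic replacement $h$, the proof goes through exactly as in the paper.
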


To prove this, we need the following two lemmas, which contain fine \emph{a priori} estimates obtained in \cite{Serrin1964,Esposito2018} for $N$-Laplace type equations.

\begin{lemma}\label{pre-lem:priori-exp}
Let $\Omega$ be a bounded domain in $\mathbb{R}^N$ and $f\in L^1(\Omega)$. Let $u\in W^{1,N}(\Omega)$ be a weak solution of 
\begin{equation}\label{pre-eq:ws-f}
	-\Delta_N^H\,u=f\quad\text{in }\Omega
\end{equation}
and let $h\in W^{1,N}(\Omega)$ be the weak solution of
\begin{equation*}
\begin{cases}
\Delta_N^H\,h=0&\text{in }\Omega\\
h=u&\text{on }\partial\Omega.
\end{cases}
\end{equation*}
Then there exists a constant $\mu$, depending only on $N$ and $H$, such that for every $0<\lambda<\mu\|f\|_{L^1(\Omega)}^{-\frac{1}{N-1}}$ it holds
$$\int_{\Omega}e^{\lambda|u-h|}\,dx\leq\frac{\mathcal{L}^N(\Omega)}{1-
\lambda\mu^{-1}\|f\|_{L^1(\Omega)}^{\frac{1}{N-1}}}.$$
\end{lemma}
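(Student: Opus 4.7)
\emph{Plan.} I would set $v := u - h$, so that $v \in W_0^{1,N}(\Omega)$ since $h = u$ on $\partial\Omega$, and reduce the statement to an energy estimate for $v$ coming from the monotonicity of the Finsler operator, which is then upgraded to exponential integrability via an anisotropic Moser--Trudinger argument. Writing $A(\xi):=H^{N-1}(\xi)\nabla H(\xi)$, I would first use the truncation $T_k(v)=\max(-k,\min(v,k))\in W_0^{1,N}(\Omega)\cap L^\infty(\Omega)$ as a test function in the weak formulations of $-\Delta_N^H u = f$ and $\Delta_N^H h = 0$. Subtracting the two identities,
\[
\int_\Omega\bigl(A(\nabla u)-A(\nabla h)\bigr)\cdot\nabla T_k(v)\,dx=\int_\Omega f\,T_k(v)\,dx\;\leq\; k\,\|f\|_{L^1(\Omega)}.
\]
Assumption \eqref{in-eq:H2} yields the monotonicity inequality $(A(\xi)-A(\eta))\cdot(\xi-\eta)\geq c_1 H^N(\xi-\eta)$ for some $c_1=c_1(N,H)>0$, which together with $\nabla T_k(v)=\chi_{\{|v|\leq k\}}\nabla v$ gives
\[
\int_\Omega H^N(\nabla T_k(v))\,dx\leq c_1^{-1}k\,\|f\|_{L^1(\Omega)}.
\]

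\emph{Step 2 — Tail of the distribution function.} Now I would rescale: with $\beta_k:=(c_1^{-1}k\,\|f\|_{L^1})^{1/N}$ and $w_k:=T_k(v)/\beta_k$, the previous estimate becomes $\int H^N(\nabla w_k)\,dx\leq 1$. The anisotropic Moser--Trudinger inequality (which under \eqref{in-eq:H2} reads $\int_\Omega \exp(\alpha_N|w|^{N/(N-1)})\,dx\leq C_{N,H}\,\mathcal{L}^N(\Omega)$ whenever $w\in W_0^{1,N}(\Omega)$ with $\int H^N(\nabla w)\leq 1$) combined with Markov's inequality and the identity $(k/\beta_k)^{N/(N-1)}=k\,c_1^{1/(N-1)}\|f\|_{L^1}^{-1/(N-1)}$ then produces, setting $\mu:=\alpha_N c_1^{1/(N-1)}$,
\[
\bigl|\{|v|\geq k\}\bigr|\;\leq\; C_{N,H}\,\mathcal{L}^N(\Omega)\,\exp\!\bigl(-\mu\,\|f\|_{L^1}^{-1/(N-1)}\,k\bigr).
\]
Combining this tail estimate with the layer-cake formula $\int_\Omega e^{\lambda|v|}\,dx=\mathcal{L}^N(\Omega)+\lambda\int_0^\infty e^{\lambda k}|\{|v|\geq k\}|\,dk$ already proves that $e^{\lambda|v|}\in L^1(\Omega)$ for every $\lambda<\mu\,\|f\|_{L^1}^{-1/(N-1)}$.

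\emph{Step 3 — Recovering the precise form.} The main obstacle is that the route above leaves a spurious multiplicative constant $C_{N,H}$ in front of $\mathcal{L}^N(\Omega)$, whereas the statement requires numerator exactly $\mathcal{L}^N(\Omega)$ and denominator with unit leading coefficient. To sharpen this, I would bootstrap Step~1 into a Moser-type $L^k$ bound by testing the subtracted equation with $|v|^{k-1}\operatorname{sgn}(v)$, combining the resulting energy estimate $(k-1)c_1\int |v|^{k-2}H^N(\nabla v)\,dx\leq \int |f||v|^{k-1}\,dx$ with the sharp Sobolev--Moser embedding of $W_0^{1,N}$ into $L^q$, and proving inductively that
\[
\int_\Omega |v|^k\,dx\;\leq\; k!\,\bigl(\mu^{-1}\|f\|_{L^1}^{1/(N-1)}\bigr)^{k}\,\mathcal{L}^N(\Omega)\qquad\text{for every integer }k\geq 0.
\]
Summing the Taylor expansion $e^{\lambda|v|}=\sum_{k\geq 0}\lambda^k|v|^k/k!$ then yields the geometric series
\[
\int_\Omega e^{\lambda|v|}\,dx\;\leq\;\frac{\mathcal{L}^N(\Omega)}{1-\lambda\,\mu^{-1}\|f\|_{L^1}^{1/(N-1)}},
\]
which is precisely the claimed bound. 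The hard part is this last step: keeping the prefactor equal to $1$ forces one to track absolute constants along the iteration and depends essentially on the sharp form of the anisotropic Moser--Trudinger inequality under \eqref{in-eq:H2}.
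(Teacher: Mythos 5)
The paper's own proof of this lemma is a one-line reduction: it simply invokes \cite[Proposition 2.1]{Esposito2018}, which is a Br\'ezis--Merle type inequality (originally \cite{BM1991} for the Laplacian and extended by Aguilar Crespo--Peral Alonso \cite{AP1997} to the $N$-Laplacian and by Esposito to general quasilinear operators) applied to the Finsler $N$-Laplacian. Your Steps~1 and~2 are a correct reconstruction of the qualitative part of that result: the energy estimate via $T_k(v)$ and the monotonicity $(A(\xi)-A(\eta))\cdot(\xi-\eta)\geq c_1\,H^N(\xi-\eta)$ (valid for $N\geq 2$ under \eqref{in-eq:H2}) are fine, and combining with the anisotropic Moser--Trudinger inequality does give $e^{\lambda|v|}\in L^1(\Omega)$ for $\lambda$ small. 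You are also right that the resulting estimate carries a spurious $C_{N,H}$ and does not yet yield the stated bound with numerator exactly $\mathcal L^N(\Omega)$.

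The gap is in Step~3, and the route you sketch there cannot be repaired. Testing the subtracted equation with $|v|^{k-1}\operatorname{sgn}(v)$ produces the term $\int_\Omega |f|\,|v|^{k-1}\,dx$ on the right-hand side, and with $f\in L^1(\Omega)$ only and $v\notin L^\infty(\Omega)$ this quantity has no control in terms of $\|f\|_{L^1(\Omega)}$ independent of $v$; any H\"older estimate reintroduces the unknown high moments of $v$, so the induction is circular. Moreover the claimed moment bound $\int_\Omega |v|^k\,dx\leq k!\,a^k\,\mathcal L^N(\Omega)$ is exactly equivalent, via the layer-cake formula, to the distribution tail $|\{|v|>t\}|\leq\mathcal L^N(\Omega)\,e^{-t/a}$, so this ``bootstrap'' does not constitute an independent route — one still has to prove the sharp tail estimate. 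The mechanism that actually produces the prefactor $\mathcal L^N(\Omega)$ (which is where Proposition 2.1 of \cite{Esposito2018} gets its strength) is a capacity/isoperimetric argument applied to \emph{nested} superlevel sets: for $s<t$, the test function $T_t(v)-T_s(v)$ gives $\int_{\{s<|v|<t\}}H^N(\nabla v)\,dx\leq c_1^{-1}(t-s)\|f\|_{L^1}$, and the function $\frac{(|v|-s)^+\wedge(t-s)}{t-s}$ — which equals $1$ on $\{|v|\geq t\}$ and is supported in $\{|v|>s\}$ — has $N$-Dirichlet energy bounded below by the anisotropic $N$-capacity of the condenser $(\{|v|\geq t\},\{|v|>s\})$, namely $N^N\mathcal L^N(B_1^{\hat H_0})\bigl(\log\tfrac{\mu(s)}{\mu(t)}\bigr)^{1-N}$ where $\mu(\tau):=|\{|v|>\tau\}|$. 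Combining these two facts yields $\log\frac{\mu(s)}{\mu(t)}\geq a(t-s)$ and hence $\mu(t)\leq\mathcal L^N(\Omega)\,e^{-at}$ with $a=\mu\,\|f\|_{L^1}^{-1/(N-1)}$, after which the layer-cake computation gives precisely the stated geometric bound. Your Step~2 should therefore be replaced by this capacity estimate rather than by Moser--Trudinger plus Markov, and Step~3 dropped entirely.
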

    
\begin{proof}
	This inequality follows by applying \cite[Proposition 2.1]{Esposito2018} to concrete equation \eqref{pre-eq:ws-f}. 
\end{proof}

We mention that this type of exponential inequality was first obtained by Brezis and Merle in \cite{BM1991} for the Laplacian, and then it was extended to the $N$-Laplacian for $N\geq 2$ (see \cite{AP1997,RW1995} and the references therein). The extension to a Finsler case can be found in \cite{WX2012,XG2016}. More generally, according to what done in \cite{AP1997}, Esposito \cite{Esposito2018} developed such inequality for quasilinear equations of the form $-\mathrm{div}\,\mathbf{a}(x,\nabla u)=f$ which have similar structure as the $N$-Laplace equations.

The next lemma concerns Serrin's local $L^\infty$ estimate in \cite{Serrin1964}.
\begin{lemma}\label{pre-lem:Serrin}
Let $u\in W_{\mathrm{loc}}^{1,N}(\Omega)$ be a weak solution of \eqref{pre-eq:ws-f} with $f\in L^{\frac{N}{N-\epsilon}}(\Omega)$ for some $0<\epsilon\leq1$. Assume $B_{2R}\subset\Omega$. Then $$\|u^+\|_{L^\infty(B_R)}\leq CR^{-1}\|u^+\|_{L^N(B_{2R})}+CR^{\frac{\epsilon}{N-1}}\|f\|_{ L^{\frac{N}{N-\epsilon}}(B_{2R})}^{\frac{1}{N-1}}$$
where $C=C(N,\epsilon,H)$ is a constant.
\end{lemma}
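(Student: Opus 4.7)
The plan is to reduce the statement to Serrin's classical local boundedness theorem in \cite{Serrin1964}, applied to the equation rewritten in divergence form as $-\mathrm{div}\,\mathbf{a}(\nabla u)=f$ with
\begin{equation*}
\mathbf{a}(\xi):=H^{N-1}(\xi)\nabla H(\xi),
\end{equation*}
and then to restore the explicit $R$-dependence via a standard rescaling argument.

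First I would verify that $\mathbf{a}$ fits Serrin's structural framework with exponent $N$. Since $H$ is positively $1$-homogeneous, $\nabla H$ is $0$-homogeneous and hence bounded; moreover $H$ is equivalent to the Euclidean norm (both consequences of \eqref{in-eq:H2}; see \cite[Subsection~2.1]{CL2022}). Combined with Euler's identity $\nabla H(\xi)\cdot\xi = H(\xi)$, this immediately yields the growth bound $|\mathbf{a}(\xi)|\leq C|\xi|^{N-1}$ and the ellipticity identity $\mathbf{a}(\xi)\cdot\xi = H^{N}(\xi)\geq c|\xi|^{N}$, which are precisely the hypotheses required in \cite{Serrin1964}. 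Serrin's local $L^{\infty}$ bound, applied on $B_{1}\subset B_{2}$ with right-hand side in $L^{N/(N-\epsilon)}$, then gives
\begin{equation*}
\|u^{+}\|_{L^{\infty}(B_{1})}\leq C\|u^{+}\|_{L^{N}(B_{2})} + C\|f\|_{L^{N/(N-\epsilon)}(B_{2})}^{\frac{1}{N-1}},
\end{equation*}
with $C=C(N,\epsilon,H)$.

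To introduce the scale $R$ I would set $\tilde{u}(y):=u(Ry)$ on $B_{2}$. The $N$-homogeneity of $\Delta_{N}^{H}$ (which follows from the $1$-homogeneity of $H$ and the $0$-homogeneity of $\nabla H$) shows that $\tilde{u}$ solves $-\Delta_{N}^{H}\tilde{u}=\tilde{f}$ with $\tilde{f}(y):=R^{N}f(Ry)$, and the change of variables $x=Ry$ gives
\begin{equation*}
\|\tilde{u}^{+}\|_{L^{N}(B_{2})} = R^{-1}\|u^{+}\|_{L^{N}(B_{2R})},\qquad \|\tilde{f}\|_{L^{N/(N-\epsilon)}(B_{2})} = R^{\epsilon}\|f\|_{L^{N/(N-\epsilon)}(B_{2R})}.
\end{equation*}
Raising the second quantity to the power $1/(N-1)$ produces the factor $R^{\epsilon/(N-1)}$, and plugging these relations into the unit-scale estimate yields exactly the claimed inequality. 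The genuine content of the argument, and the step that deserves the most care, is the verification of Serrin's structural hypotheses for the anisotropic operator; once this is settled, the Moser-type iteration underlying Serrin's theorem and the scaling computation are entirely routine.
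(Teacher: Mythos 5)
Your proposal is correct and follows essentially the same route as the paper: cite Serrin's Theorem~2 in \cite{Serrin1964}, verify that the anisotropic vector field $\mathbf{a}(\xi)=H^{N-1}(\xi)\nabla H(\xi)$ satisfies the required structure conditions (growth $|\mathbf{a}(\xi)|\lesssim|\xi|^{N-1}$, coercivity $\mathbf{a}(\xi)\cdot\xi=H^N(\xi)\gtrsim|\xi|^N$), and recover the explicit $R$-dependence by scaling. Your scaling computation of $\|\tilde u^+\|_{L^N(B_2)}$ and $\|\tilde f\|_{L^{N/(N-\epsilon)}(B_2)}$ is correct.

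The one point you gloss over, and which the paper explicitly flags, is that Serrin's Theorem~2 is stated as a bound on $\sup|u|$ in terms of $\|u\|_{L^N}$ and the data, not directly as a one-sided bound $\|u^+\|_{L^\infty}\lesssim\|u^+\|_{L^N}+\dots$. To obtain the $u^+$ form one has to go through the Moser iteration in Serrin's proof and observe that the upper bound on $u$ uses only truncations of $u^+$, so $|u|$ can be replaced throughout by $u^+$. This is a routine but genuine adaptation, not an immediate consequence of the stated theorem, so it should be pointed out (as the paper does) rather than asserted silently.
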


\begin{proof}
The proof immediately follows by adapting the argument in the proof of \cite[Theorem 2]{Serrin1964} for equation \eqref{pre-eq:ws-f} and replacing $|u|$ with $u^+$ in the proof therein.
\end{proof}

With above lemmas, we prove Proposition \ref{pre-pro:global} in the following. 
\begin{proof}[Proof of Proposition \ref{pre-pro:global}]
Let $\bar{x}\in\mathbb{R}^N$ and $0<r<1$, and let $h\in W^{1,N}(B_r(\bar x ))$ be the weak solution of the problem
\begin{equation*}
\begin{cases}
\Delta_N^H\,h=0&\text{in }B_r(\bar x)\\
h=u&\text{on }\partial B_r(\bar x).
\end{cases}
\end{equation*}
Clearly, $h\leq u$ in $B_r(\bar x)$ by the comparison principle. Then we have
\begin{equation}\label{pre-eq:LN-u-h}
\int_{B_r(\bar x )}(h^+)^N\,dx\leq\int_{B_r(\bar x )}(u^+)^N\,dx\leq N!\int_{\mathbb{R}^N}e^u\,dx.
\end{equation}
Applying Lemma \ref{pre-lem:Serrin} to $h^+$ yields 
\begin{equation}\label{pre-eq:sup-h}
\|h^+\|_{L^\infty(B_{r/2}(\bar x))}\leq C(r).
\end{equation}

Assume now $r$ to be small enough (possibly dependent on $\bar x$) such that 
\begin{equation}\label{pre-eq:mu-r}
\|e^u\|_{L^1(B_r(\bar x ))}^{\frac{1}{N-1}}\leq\frac{(N-1)\mu}{2N}
\end{equation}
where $\mu$ is as in Lemma \ref{pre-lem:priori-exp}. By Lemma \ref{pre-lem:priori-exp} we obtain that
\begin{equation}\label{pre-eq:exp-u-h}
\int_{B_r(\bar x )}e^{\frac{N}{N-1}(u-h)}\,dx\leq C(r).
\end{equation}
Then \eqref{pre-eq:sup-h} yields
\begin{equation}\label{pre-eq:Le-eu}
\int_{B_{r/2}(\bar x)}e^{\frac{N}{N-1}u}\,dx=\int_{B_{r/2}(\bar x)}e^{\frac{N}{N-1}(u-h)}e^{\frac{N}{N-1}h}\,dx\leq C(r).
\end{equation}
Thanks to \eqref{pre-eq:LN-u-h} and \eqref{pre-eq:Le-eu}, by applying Lemma \ref{pre-lem:Serrin} we thus get  
\begin{equation}\label{pre-eq:sup-u+}
\|u^+\|_{L^\infty(B_{r/4}(\bar x))}\leq C(r).
\end{equation} 

Let $R_0>0$ be large enough and to be chosen later. For any $\bar x \in\mathbb{R}^N\setminus\overline{B_{R_0+1}}$ we have that
$$\int_{B_1(\bar x )}e^u\,dx\leq\int_{\mathbb{R}^N\setminus B_{R_0}}e^u\,dx \,,$$
and we deduce that \eqref{pre-eq:mu-r} holds with $r=1$ as long as $R_0$ is sufficiently large. Thus, it follows from \eqref{pre-eq:sup-u+} that 
\begin{equation}\label{pre-eq:sup-u+-ex}
\|u^+\|_{L^\infty\left(\mathbb{R}^N\setminus\overline{B_{R_0+1}}\right)}\leq C(1).
\end{equation}
Since $\overline{B_{R_0+1}}$ is compact, one can find a finite number of points $x_i$ and numbers $r_i$ ($i=1,\cdots,m$) fulfilling \eqref{pre-eq:mu-r} such that 
$$\overline{B_{R_0+1}}\subset\bigcup_{i=1}^m B_{r_i/4}(x_i).$$
 Hence, by \eqref{pre-eq:sup-u+} we have
\begin{equation*}
\|u^+\|_{L^\infty(\overline{B_{R_0+1}})}\leq\max_{1\leq i\leq m}C(r_i)<+\infty.
\end{equation*}
This together with \eqref{pre-eq:sup-u+-ex} implies $u^+\in L^\infty(\mathbb{R}^N)$. 

Furthermore, noting that $e^u\in L^\infty(\mathbb{R}^N)$, we can apply interior regularity results in \cite{DiBen1983,Tolksdorf1984,Serrin1964} to deduce that $u\in C_{\mathrm{loc}}^{1,\alpha}(\mathbb{R}^N)$ for some $\alpha\in(0,1)$. 

Next we show property \eqref{pre-eq:u-upper}. For $R>0$, let $$u_R(x)=u(Rx)+N\log R\quad\quad\forall\,x\in\mathbb{R}^N.$$ Observe that $u_R$ is also a solution of \eqref{in-eq:Lio}. Given $x_0\in\mathbb{S}^{N-1}$, let $h_R\in W^{1,N}(B_\frac12(x_0))$ be the weak solution of
\begin{equation*}
\begin{cases}
\Delta_N^H\,h_R=0&\text{in }B_\frac12(x_0)\\
h_R=u_R&\text{on }\partial B_\frac12(x_0).
\end{cases}
\end{equation*}
As argued above for \eqref{pre-eq:LN-u-h}, we see that 
\begin{equation*}
\int_{B_\frac12(x_0)}(h_R^+)^N\,dx\leq\int_{B_\frac12(x_0)}(u_R^+)^N\,dx\leq N!\int_{\mathbb{R}^N}e^{u_R}\,dx= N!\int_{\mathbb{R}^N}e^{u}\,dx.
\end{equation*}
Thus, Lemma \ref{pre-lem:Serrin} yields that
$$\|h_R^+\|_{L^\infty(B_{1/4}(x_0))}\leq C$$
for some constant $C$ independent of $R$ and of $x_0$.

Since $$\int_{B_\frac12(x_0)}e^{u_R}\,dx\leq\int_{\mathbb{R}^N\setminus B_\frac12}e^{u_R}\,dx\leq\int_{\mathbb{R}^N\setminus B_\frac{R}{2}}e^{u}\,dx,$$
there exists $R_1>1$ sufficiently large such that for any $R>R_1$ we have
\begin{equation*}
\|e^{u_R}\|_{L^1(B_\frac12(x_0))}^{\frac{1}{N-1}}\leq\frac{(N-1)\mu}{2N}.
\end{equation*}
Then we can argue as in \eqref{pre-eq:exp-u-h}--\eqref{pre-eq:sup-u+} to obtain
\begin{equation*}
\|u_R^+\|_{L^\infty(B_{1/8}(x_0))}\leq C
\end{equation*}
for any $R>R_1$ and some constant $C$ independent of $R$ and of $x_0$. This implies that
\begin{equation}\label{pre-eq:sup-S-uR}
\|u_R^+\|_{L^\infty(\mathbb{S}^{N-1})}\leq C,
\end{equation}
since $\mathbb{S}^{N-1}$ can be covered by a finite number of balls $B_{1/8}(x_0)$, $x_0\in\mathbb{S}^{N-1}$.

In view of \eqref{pre-eq:sup-S-uR}, we can deduce that
\begin{equation*}
u(x)+N\log|x|\leq C
\end{equation*}
for all $x\in\mathbb{R}^N\setminus\overline{B_{R_1}}$. Since this estimate also holds for $x\in\overline{B_{R_1}}\setminus\{0\}$, we have shown \eqref{pre-eq:u-upper}, completing the proof.
\end{proof}

Proposition \ref{pre-pro:global} enables us to derive the following inequality, whose argument is essential for proving Theorem \ref{thm:main} in Section \ref{sec:pf}.
 
\begin{lemma}\label{pre-lem:eu-lower}
Let $u$ be a solution of \eqref{in-eq:Lio}. Then
\begin{equation}\label{pre-eq:eu-lower}
\int_{\mathbb{R}^N}e^u\,dx\geq N\left(\frac{N^2}{N-1}\right)^{N-1}\mathcal{L}^N(B_1^{\hat{H}_0}).
\end{equation}
\end{lemma}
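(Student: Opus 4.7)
My plan is to follow a level-set argument in the spirit of \cite{CK1995,Esposito2018}, combining H\"older's inequality with the anisotropic isoperimetric (Wulff) inequality to produce a one-dimensional ODE-type inequality, which upon integration yields the stated lower bound.

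First, I would set
\begin{equation*}
\mu(t):=\mathcal{L}^N(\{u>t\}),\qquad M(t):=\int_{\{u>t\}}e^u\,dx.
\end{equation*}
Proposition \ref{pre-pro:global} gives $\sup u<+\infty$ and the decay $u(x)\le C-N\log|x|$, which makes $\{u>t\}$ bounded for every $t$; together with $u\in C^{1,\alpha}_{\mathrm{loc}}$, Sard's theorem ensures that $\{u=t\}$ is a smooth hypersurface for a.e.\ $t$. Three pointwise identities will drive the proof: by the coarea formula,
\begin{equation*}
-\mu'(t)=\int_{\{u=t\}}\frac{d\mathcal{H}^{N-1}}{|\nabla u|};
\end{equation*}
integrating the equation over $\{u>t\}$ and applying the divergence theorem with outward unit normal $\nu=-\nabla u/|\nabla u|$, together with the Euler relation $\nabla H(\xi)\cdot\xi=H(\xi)$, yields
\begin{equation*}
M(t)=\int_{\{u=t\}}\frac{H^N(\nabla u)}{|\nabla u|}\,d\mathcal{H}^{N-1};
\end{equation*}
finally, since $H(\nabla u)/|\nabla u|=H(-\nu)$, the quantity $\int_{\{u=t\}}H(\nabla u)/|\nabla u|\,d\mathcal{H}^{N-1}$ coincides with the anisotropic perimeter $\int_{\partial\{u>t\}}H(-\nu)\,d\mathcal{H}^{N-1}$, whose Wulff minimizer is $B_1^{\hat H_0}$.

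The core step is to sandwich this perimeter. H\"older's inequality with exponents $N$ and $N/(N-1)$, applied via the factorization $H(\nabla u)/|\nabla u|=(H^N(\nabla u)/|\nabla u|)^{1/N}(1/|\nabla u|)^{(N-1)/N}$, gives
\begin{equation*}
\int_{\{u=t\}}\frac{H(\nabla u)}{|\nabla u|}\,d\mathcal{H}^{N-1}\le M(t)^{1/N}(-\mu'(t))^{(N-1)/N},
\end{equation*}
while the anisotropic isoperimetric inequality gives
\begin{equation*}
\int_{\{u=t\}}\frac{H(\nabla u)}{|\nabla u|}\,d\mathcal{H}^{N-1}\ge N\kappa^{1/N}\mu(t)^{(N-1)/N},\qquad \kappa:=\mathcal{L}^N(B_1^{\hat H_0}).
\end{equation*}
Combining these two and taking $(N-1)$-th roots gives
\begin{equation*}
N^{N/(N-1)}\kappa^{1/(N-1)}\mu(t)\le M(t)^{1/(N-1)}(-\mu'(t)).
\end{equation*}
I would then use the identity $M'(t)=e^t\mu'(t)$ (immediate from $M(t)=\int_t^{\infty}e^s(-\mu'(s))\,ds$) to replace $-\mu'(t)=e^{-t}(-M'(t))$, and integrate the resulting inequality over $t\in\mathbb{R}$. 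By Fubini, $\int_{\mathbb{R}}\mu(t)e^t\,dt=\int_{\mathbb{R}^N}e^u\,dx=:M_\infty$; the right-hand side becomes the exact integral $\int_{\mathbb{R}}M(t)^{1/(N-1)}(-M'(t))\,dt=\int_0^{M_\infty}s^{1/(N-1)}\,ds=\frac{N-1}{N}M_\infty^{N/(N-1)}$. Solving for $M_\infty$ yields precisely \eqref{pre-eq:eu-lower}.

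The technical care I foresee concerns: (i) the divergence-theorem identity, where the boundedness of $\{u>t\}$ from Proposition \ref{pre-pro:global} and the $C^{1,\alpha}$ regularity justify the computation; (ii) the a.e.\ validity of the coarea-based identities via Sard's theorem; and (iii) the bookkeeping of the anisotropy --- the sign flip $\nu=-\nabla u/|\nabla u|$ is exactly what makes the Wulff minimizer $B_1^{\hat H_0}$ rather than $B_1^{H_0}$, so that the target constant $\mathcal{L}^N(B_1^{\hat H_0})$ appears. As a consistency check, equality throughout (H\"older, isoperimetric, and the resulting ODE) holds for the explicit solutions in \eqref{in-eq:u}, which accounts for the sharpness of the bound.
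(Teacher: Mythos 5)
Your proposal follows essentially the same route as the paper: a level-set argument in the variables $\mu(t)$ and $M(t)$, the divergence-theorem identity $M(t)=\int_{\{u=t\}}H^N(\nabla u)/|\nabla u|\,d\mathcal H^{N-1}$, the coarea relations $-\mu'(t)=\int_{\{u=t\}}|\nabla u|^{-1}\,d\mathcal H^{N-1}$ and $M'(t)=e^t\mu'(t)$, then H\"older plus the anisotropic isoperimetric inequality, and finally an integration in $t$. Your way of evaluating the right-hand side via the substitution $s=M(t)$ is exactly the paper's $\int\bigl(-\tfrac{d}{dt}M^{N/(N-1)}\bigr)\,dt$ in different notation, so the core computation is identical and the constant matches.

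There is, however, one genuine gap: you justify the a.e.\ regularity of the level sets by invoking ``Sard's theorem together with $u\in C^{1,\alpha}_{\mathrm{loc}}$.'' For scalar maps $u:\mathbb R^N\to\mathbb R$, the classical Morse--Sard theorem requires $C^N$ regularity (and $C^{1,\alpha}$ with $\alpha<1$ already fails even for $N=2$ by Norton's examples), so this step does not follow as stated. The paper avoids this by appealing to the anisotropic second-order regularity result of Antonini--Ciraolo--Farina \cite[Corollary 1.7]{ACF2021}, which yields that the critical set $\{\nabla u=0\}$ is $\mathcal L^N$-negligible (and, in combination with the Sobolev regularity established there, that the set of critical values is $\mathcal L^1$-negligible), thereby guaranteeing that a.e.\ level set $\{u=t\}$ is a $C^{1,\alpha}$ hypersurface with nonvanishing gradient. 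You should replace the appeal to Sard by this (or an equivalent) structural result; otherwise the application of the divergence theorem and the coarea formula on $\{u=t\}$ for a.e.\ $t$ is not justified.
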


\begin{proof}
As in \cite{CL1991,Esposito2018}, we exploit a level set argument.

From \cite[Corollary 1.7]{ACF2021}\footnote{We notice that the anisotropy $H$ considered in \cite{ACF2021} was actually assumed to be a centrally symmetric norm. However, a careful inspection of the proofs reveals that all the results in \cite{ACF2021} remain valid when $H$ is the one we are considering, i.e. a possibly asymmetric norm. In particular, one can repeat all the steps in \cite{ACF2021} in this case. For this reason, we omit the proof.}, we know that the set $$Z_k=\{x\in B_k:\nabla u(x)=0\}$$ is of $\mathcal{L}^N$-measure zero for all $k\in\mathbb{N}$. This implies that the set
$$\{t\in\mathbb{R}:\exists\,x\in\mathbb{R}^N\text{ s.t. }u(x)=t,\nabla u(x)=0\}=\bigcup_{k\in\mathbb{N}}u(Z_k)$$
is of $\mathcal{L}^1$-measure zero. Thus, for almost all $t\in(-\infty,t_0)$ with $t_0:=\sup_{\mathbb{R}^N}u<+\infty$, the superlevel set 
\begin{equation}\label{pre-eq:superlevel-u}
	\Omega_t=\{x\in\mathbb{R}^N:u(x)>t\}
\end{equation}
is a smooth set and the unit outer normal to $\partial\Omega_t$ is $-\frac{\nabla u}{|\nabla u|}$. Also, it follows from \eqref{pre-eq:u-upper} that $\Omega_t$ is bounded for each $t\in(-\infty,t_0)$.

By the divergence theorem (see for instance \cite[Lemma 4.3]{CL2022}), we have that
\begin{align}
\int_{\Omega_t}e^u\,dx&=\int_{\partial\Omega_t}H^{N-1}(\nabla u)\left<\nabla H(\nabla u),\frac{\nabla u}{|\nabla u|}\right>d\mathcal{H}^{N-1}\notag\\
&=\int_{\partial\Omega_t}\frac{H^{N}(\nabla u)}{|\nabla u|}\,d\mathcal{H}^{N-1}\label{pre-eq:level-eu}
\end{align}
for a.e. $t<t_0$. Here we have used that $\left<\nabla H(\xi),\xi\right>= H(\xi)$ for any $\xi\neq 0$, which follows from the homogeneity of $H$. For $\epsilon\in(0,1)$, set $$\Omega_{t,\epsilon}=\{x\in\mathbb{R}^N:u(x)>t,|\nabla u(x)|>\epsilon\}.$$
By the coarea formula, we can calculate
\begin{align*}
\mathcal{L}^N(\Omega_{t,\epsilon})=\int_t^{t_0}\int_{\partial\Omega_s\cap\{|\nabla u|>\epsilon\}}\frac{1}{|\nabla u|}\,d\mathcal{H}^{N-1}\,ds.
\end{align*}
Using the monotone convergence and letting $\epsilon\to 0$ yields
\begin{equation*}
\mathcal{L}^N(\Omega_{t})=\int_t^{t_0}\int_{\partial\Omega_s}\frac{1}{|\nabla u|}\,d\mathcal{H}^{N-1}\,ds
\end{equation*}
for a.e. $t<t_0$. Hence,
\begin{equation}\label{pre-eq:d-level}
-\frac{d}{dt}\mathcal{L}^N(\Omega_{t})=\int_{\partial\Omega_t}\frac{1}{|\nabla u|}\,d\mathcal{H}^{N-1}\quad\text{for a.e. } t<t_0.
\end{equation}
Likewise, we have
\begin{equation*}
\int_{\Omega_t}e^u\,dx=\int_{t}^{t_0}\int_{\partial\Omega_s}\frac{e^s}{|\nabla u|}\,d\mathcal{H}^{N-1}\,ds
\end{equation*}
and
\begin{equation}\label{pre-eq:level-d-eu}
-\frac{d}{dt}\left(\int_{\Omega_t}e^u\,dx\right)=e^t\int_{\partial\Omega_t}\frac{1}{|\nabla u|}\,d\mathcal{H}^{N-1}
\end{equation}
for a.e. $t<t_0$.

By virtue of \eqref{pre-eq:level-eu}--\eqref{pre-eq:level-d-eu}, we compute
\begin{align}
&-\frac{d}{dt}\left(\int_{\Omega_t}e^u\,dx\right)^{\frac{N}{N-1}}\notag\\
&=\frac{N}{N-1}\left(\int_{\partial\Omega_t}\frac{H^{N}(\nabla u)}{|\nabla u|}\,d\mathcal{H}^{N-1}\right)^{\frac{1}{N-1}}e^t\int_{\partial\Omega_t}\frac{1}{|\nabla u|}\,d\mathcal{H}^{N-1}\notag\\
&\geq\frac{N}{N-1}e^t\left(\int_{\partial\Omega_t}\frac{H(\nabla u)}{|\nabla u|}\,d\mathcal{H}^{N-1}\right)^{\frac{N}{N-1}}\label{pre-eq:Hol-ineq}\\
&=\frac{N}{N-1}e^t\left(\int_{\partial\Omega_t}H(-\nu)\,d\mathcal{H}^{N-1}\right)^{\frac{N}{N-1}}\notag\\
&\geq\frac{N}{N-1}e^t\left[N\left[\mathcal{L}^N(B_1^{\hat{H}_0})\right]^{\frac{1}{N}}\left[\mathcal{L}^N(\Omega_{t})\right]^{\frac{N-1}{N}}\right]^{\frac{N}{N-1}}\label{pre-eq:iso-ineq}
\end{align}
for a.e. $t<t_0$, where we have used the H\"older inequality and the anisotropic isoperimetric inequality (see for instance \cite[Theorem 1.2]{CRS2016}) in \eqref{pre-eq:Hol-ineq} and \eqref{pre-eq:iso-ineq}, respectively. For simplicity, let us set 
\begin{equation}\label{pre-eq:def-kappa}
	\kappa_N:=\frac{N^2}{N-1}(N)^{\frac{1}{N-1}}\left[\mathcal{L}^N(B_1^{\hat{H}_0})\right]^{\frac{1}{N-1}}.
\end{equation}
Then, integrating the above inequality from $-\infty$ to $t_0$ we get
\begin{align*}
\left(\int_{\mathbb{R}^N}e^u\,dx\right)^{\frac{N}{N-1}}&\geq\kappa_N\int_{-\infty}^{t_0}e^t\int_{\mathbb{R}^N}\chi_{\Omega_t}\,dx\,dt\\
&=\kappa_N\int_{\mathbb{R}^N}\int_{-\infty}^{t_0}e^t\chi_{\Omega_t}\,dt\,dx\\
&=\kappa_N\int_{\mathbb{R}^N}\int_{-\infty}^{u}e^t\,dt\,dx\\
&=\kappa_N\int_{\mathbb{R}^N}e^u\,dx.
\end{align*}
This implies \eqref{pre-eq:eu-lower} and finishes the proof.
\end{proof}

\begin{remark}\label{pre-rk:non-weight}
	Unlike the argument in \cite[Theorem 3.1]{Esposito2018} leading to \eqref{pre-eq:eu-lower} for $H(\xi)=|\xi|$, here we do not need to rely on a weighted $L^q$ ($1\le q<N$) estimate for $\nabla u$ at infinity (see \cite[Corollary 2.6]{Esposito2018}) to compute integrals over the level sets $\Omega_t$ (such as \eqref{pre-eq:level-eu}) by approximating them on $\Omega_t\cap B_r$ and letting $r\to+\infty$ instead. This is because we already know that $\Omega_t$ is bounded thanks to estimate \eqref{pre-eq:u-upper}, as explained in \cite[Remark 3.2]{Esposito2018}. Also, unlike \cite{Esposito2018}, in the next section we will use estimate \eqref{pre-eq:u-upper} instead of inequality \eqref{pre-eq:eu-lower} to analyze the precise asymptotic behavior of $u$ and $\nabla u$ at infinity.
\end{remark}

\section{Asymptotic behavior at infinity}\label{sec:asym}

Starting from estimate \eqref{pre-eq:u-upper}, in this section we shall determine precisely the asymptotic behavior at infinity for solutions $u$ of \eqref{in-eq:Lio} and that for $\nabla u$ as described in Subsection \ref{sec:derivation}. Hereafter, for simplicity, we set
\begin{equation}\label{asym-eq:def-gamma0}
	\gamma_0:=\left[\frac{\int_{\mathbb{R}^N}e^u\,dx}{N\mathcal{L}^N(B_1^{\hat{H}_0})}\right]^{\frac{1}{N-1}}.
\end{equation}

\begin{proposition}\label{asym-prop:asym-u-Du}
Let $u$ be a solution of \eqref{in-eq:Lio} and let $\gamma_0$ be given by \eqref{asym-eq:def-gamma0}. Then 
\begin{equation}\label{asym-eq:prop-log-u}
	u(x)+\gamma_0\log\hat{H}_0(x)\in L^\infty(\mathbb{R}^N\setminus B_1)
\end{equation}
and
\begin{equation}\label{asym-eq:prop-asym-Du}
\lim_{|x|\to\infty}|x|\left|\nabla\left(u(x)+\gamma_0\log \hat{H}_0(x)\right)\right|=0.
\end{equation}
\end{proposition}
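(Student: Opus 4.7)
The plan is to adapt, via a scaling argument, Esposito's interior decomposition \eqref{in-eq:decom-hat-u} so that it applies on an \emph{exterior} domain, and then to invoke Serrin's \cite{Serrin1965} analysis of quasilinear equations in exterior domains to extract a precise logarithmic profile for $u$ together with its gradient expansion.

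First I would choose $R$ so large that $\|e^u\|_{L^1(\mathbb{R}^N\setminus B_R)}$ lies below the threshold in Lemma \ref{pre-lem:priori-exp}, and in $\mathbb{R}^N\setminus\overline{B_R}$ introduce the Finsler $N$-harmonic function $v$ with $v|_{\partial B_R}=u|_{\partial B_R}$ and at most logarithmic growth at infinity. Setting $w:=u-v$, the same chain used in the proof of Proposition \ref{pre-pro:global} — the Brezis–Merle inequality (Lemma \ref{pre-lem:priori-exp}) combined with Serrin's local $L^\infty$ estimate (Lemma \ref{pre-lem:Serrin}) and the comparison principle for $\Delta_N^H$ — yields $w\in L^\infty(\mathbb{R}^N\setminus B_R)$. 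Together with the upper bound \eqref{pre-eq:u-upper} this is the exterior counterpart of the decomposition $\hat u=u_0+h$ exploited in \cite{Esposito2018}.

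Since $v$ solves the homogeneous equation $\Delta_N^H v=0$ in an exterior domain and has at most logarithmic growth, and since \eqref{in-eq:H2} places $\Delta_N^H$ within the class of uniformly elliptic quasilinear operators treated by Serrin \cite{Serrin1965}, his exterior/isolated-singularity theorems — sharpened by the anisotropic tools from \cite{CL2022} — produce a constant $c$ for which
\begin{equation*}
v(x)=c\log\hat{H}_0(x)+O(1),\qquad \bigl|\nabla v(x)-c\,\nabla\log\hat{H}_0(x)\bigr|=o\bigl(\hat{H}_0(x)^{-1}\bigr),
\end{equation*}
using that $\log\hat{H}_0$ is, up to a multiplicative constant, the fundamental solution of $\Delta_N^H$. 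To identify $c=-\gamma_0$ I would integrate $-\Delta_N^H u=e^u$ over $B_\rho^{\hat{H}_0}$ and apply the divergence theorem: as $\rho\to\infty$ the boundary flux is dominated, via the previous gradient expansion and the vanishing of $\nabla w$ on large Wulff spheres, by the flux of $c\,\nabla\log\hat{H}_0$. The homogeneity identities $\langle\nabla H(\xi),\xi\rangle=H(\xi)$ and $H(\nabla\hat{H}_0)\equiv 1$ then evaluate this flux as a fixed multiple of $|c|^{N-1}\mathrm{sgn}(c)\cdot N\mathcal{L}^N(B_1^{\hat{H}_0})$, and matching against $-\int_{\mathbb{R}^N}e^u\,dx$ pins down $c=-\gamma_0$. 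This proves \eqref{asym-eq:prop-log-u}.

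For \eqref{asym-eq:prop-asym-Du} I would run a scaling/compactness argument. Setting $u_R(x):=u(Rx)+N\log R$, which solves \eqref{in-eq:Lio} again, the previous steps applied uniformly in $R$, together with the interior $C^{1,\alpha}$ regularity of \cite{DiBen1983,Tolksdorf1984,Lieberman1988}, show that $u_R+\gamma_0\log\hat{H}_0$ is equibounded and equicontinuous in $C^1$ on the annulus $\{1/2<|x|<2\}$; any subsequential $C^1$ limit is Finsler $N$-harmonic with logarithmic profile already matched by $-\gamma_0\log\hat{H}_0$, hence constant. Consequently $|x|\,|\nabla u_R(x)+\gamma_0\nabla\log\hat{H}_0(x)|\to 0$ uniformly on that annulus, and undoing the scaling yields \eqref{asym-eq:prop-asym-Du}. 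The main obstacle lies in the second step: one must verify that the fundamental solution of $\Delta_N^H$ is (a constant multiple of) $-\log\hat{H}_0$ and upgrade the generic $O(\hat{H}_0^{-1})$ Serrin-type gradient bound into a genuine $o(\hat{H}_0^{-1})$ remainder, strong enough to survive passage to the limit in the anisotropic flux integral used to fix $c$.
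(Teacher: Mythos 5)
Your overall strategy matches the paper's: decompose $u$ in an exterior region as a bounded part plus a Finsler $N$-harmonic part, invoke Serrin's \cite{Serrin1965} exterior-singularity theorem (plus the anisotropic refinements of \cite{CL2022}) to pin down the logarithmic profile, identify the coefficient by a flux computation, and run a blow-down to get the gradient expansion. However, there are two concrete gaps.

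First, you posit directly a Finsler $N$-harmonic $v$ on $\mathbb{R}^N\setminus\overline{B_R}$ with $v=u$ on $\partial B_R$ and ``at most logarithmic growth at infinity.'' That existence is not free: the exterior Dirichlet problem is not well-posed without a prescribed behaviour at infinity, and there is no Kelvin transform in the Finsler setting to reduce it to a bounded domain. The paper avoids this by solving the annular problems $\Delta_N^H v_R=0$ in $B_R\setminus\overline{B_{R_0}}$ with $v_R=u$ on both boundary components, proving the uniform bound $\|u-v_R\|_{L^\infty(B_R\setminus B_{R_0})}\le C$ via a scaling to a fixed domain and the $L^\infty$ estimate of \cite[Prop.~4.1]{Esposito2018} for the difference $W_R$ (not merely via Lemmas \ref{pre-lem:priori-exp}--\ref{pre-lem:Serrin} as you suggest), and then passing to the limit $R\to\infty$ to obtain $v_\infty$. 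You should either reproduce this limiting construction or supply an existence argument for the exterior problem.

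Second, the shift in your blow-down is wrong. Writing $u_R(x)=u(Rx)+N\log R$, one has from \eqref{asym-eq:prop-log-u} that on $\{1/2<|x|<2\}$
\begin{equation*}
u_R(x)+\gamma_0\log\hat{H}_0(x)=(N-\gamma_0)\log R+O(1),
\end{equation*}
which diverges as $R\to\infty$ since $\gamma_0>N$; hence $u_R+\gamma_0\log\hat{H}_0$ is \emph{not} equibounded and you cannot invoke the regularity theory or Ascoli--Arzel\`a as stated. The correct normalization is $u_m(y)=u(my)+\gamma\log m$ (with $\gamma$ the logarithmic coefficient, later shown to equal $\gamma_0$), which does keep $u_m+\gamma\log\hat{H}_0$ uniformly bounded; the price is that $u_m$ then solves a rescaled inhomogeneous equation with right-hand side $m^{N-\gamma}e^{G_m}\hat{H}_0^{-\gamma}$, whose uniform $L^\infty_{\mathrm{loc}}$ bound on $\mathbb{R}^N\setminus\{0\}$ again hinges on $\gamma>N$. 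Your conclusion about the gradient can be salvaged (the spurious constant $(N-\gamma_0)\log R$ drops out upon taking gradients) but only after switching to the correct shift, since the compactness step requires a bound on the functions themselves, not just on their gradients.
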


\begin{proof}
	 We shall first show a lower logarithmic estimate for $u$ near infinity. By Proposition \ref{pre-pro:global}, we know $t_0:=\sup_{\mathbb{R}^N} u<+\infty$ and  $$0\le e^{u(x)}\le\frac{C}{|x|^N}\quad\text{in }\mathbb{R}^N\setminus\{0\},$$ for some constant $C>0$. Let $\tilde{u}=t_0-u$. It follows from \eqref{in-eq:Lio} and the positive homogeneity of $H$ that $\tilde{u}$ is a nonnegative solution of the equation
	\begin{equation}\label{asym-eq:Lio-dual-u}
		\mathrm{div}\left(\hat{H}^{N-1}(\nabla\tilde{u})\nabla \hat{H}(\nabla \tilde{u})\right)=e^u \quad\text{in }\mathbb{R}^N,
	\end{equation}
where $\hat{H}$ is as in \eqref{in-eq:def-hat-H}. Notice that $\tilde{u}(x)\to+\infty$ as $|x|\to+\infty$ by \eqref{pre-eq:u-upper}. Thus, by applying Theorem \ref{sing-thm:log-A-f} to \eqref{asym-eq:Lio-dual-u}, we get that estimate \eqref{sing-eq:log-sing} holds for $\tilde{u}$, which in turn implies
\begin{equation}\label{asym-pf-eq:log-u}
	-d\log|x|\le u(x)\le -\frac{1}{d}\log|x|
\end{equation}
in a neighborhood of infinity, for some constant $d>1$.

Let $\gamma>0$ be such that 
\begin{equation}\label{asym-eq:def-gamma}
	-\gamma=\liminf_{|x|\to+\infty}\frac{u(x)}{\log\hat{H}_0(x)}\,.
\end{equation}
 Clearly, $\gamma$ exists and it is unique, according to \eqref{asym-pf-eq:log-u} and the fact that 
\begin{equation}\label{asym-eq:bound-H0}
	\mu_1|x|\le H_0(x)\le\mu_2|x|\quad\,\forall x\in\mathbb{R}^N,
\end{equation}
for some constants $\mu_1,\mu_2>0$ (which follows from the homogeneity of $H_0$). We next demonstrate that, actually, 
\begin{equation}\label{asym-eq:lim-u}
	\lim_{|x|\to+\infty}\frac{u(x)}{\log\hat{H}_0(x)}=-\gamma
\end{equation} 
and that
\begin{equation}\label{asym-eq:lower-u}
	u(x)\ge C_1-\gamma\log\hat{H}_0(x)\quad\text{in }\mathbb{R}^N\setminus\overline{B_1}
\end{equation}
for some constant $C_1$.

 Indeed, for $R>2$ and $|x|>\frac{1}{R}$, let $u_R(x)=\frac{u(Rx)-\omega}{\log R}$ and $f_R(x)=e^{u(Rx)}$. Here $\omega:=\inf_{\partial B_2^{\hat{H}_0}} u$. Via \eqref{asym-pf-eq:log-u}, we see that $$|u_R(x)|\le \frac{d\log|Rx|+C_2}{\log R} $$ for some constant $C_2$, which implies $u_R$ is bounded in $L_{\mathrm{loc}}^\infty(\mathbb{R}^N\setminus\{0\})$, uniformly in $R$. Also, we have
 \begin{equation}\label{asym-eq:uR}
 	-\Delta_N^H \,u_R=\frac{R^N}{(\log R)^{N-1}}f_R\quad\text{in }\mathbb{R}^N\setminus\overline{B_\frac1R}.
 \end{equation}
 Notice from \eqref{pre-eq:u-upper} that $|f_R(x)|\le \frac{C_3}{|Rx|^N}$ for any $x\neq 0$ and for some constant $C_3$, which implies that the right-hand side of \eqref{asym-eq:uR} is also bounded in $L_{\mathrm{loc}}^\infty(\mathbb{R}^N\setminus\{0\})$, uniformly in $R$. Hence, it follows from the regularity results in \cite{DiBen1983,Tolksdorf1984} that $u_R$ is uniformly bounded in $C^{1,\alpha}_{\mathrm{loc}}(\mathbb{R}^N\setminus\{0\})$. Then Ascoli--Arzel\`a theorem and a diagonal process yield that, along a subsequence $R_j\to+\infty$, $u_{R_j}\to u_\infty$ in $C_{\mathrm{loc}}^1(\mathbb{R}^N\setminus\{0\})$ where $u_\infty$ satisfies
 \begin{equation*}
 	\Delta_N^H\, u_\infty=0 \quad \text{in } \mathbb{R}^N\setminus\{0\}.
 \end{equation*}
 Moreover, we observe that $|u_\infty(x)|\le d$ for any $x\neq 0$. Thus, it is forced by Proposition \ref{sing-prop:rigid} (or Lemma \ref{sing-lem:rigid}) that $u_\infty$ is a constant function. To determine this constant, let us define
$$\Gamma(R)=\inf_{2\le \hat{H}_0(x)\le R}\frac{u(x)-\omega}{\log \hat{H}_0(x)}.$$
 Clearly, $\Gamma(R)$ is non-increasing and $\Gamma(R)<0$ for all sufficiently large $R$ (let us say $R>\mathscr{R}$). Since for each $R>\mathscr{R}$, $$-\Delta_N^H\, (u-\omega)\ge-\Delta_N^H\left(\Gamma(R)\log\hat{H}_0\right)=0\quad\text{in } B_R^{\hat{H}_0}\setminus\overline{B_2^{\hat{H}_0}}\,,$$ the comparison principle yields that
 $$\Gamma(R)=\inf_{\partial B_R^{\hat{H}_0}}\frac{u(x)-\omega}{\log R}.$$ Therefore, by the definition of $-\gamma$, we infer that $\lim_{R\to+\infty}\Gamma(R)=-\gamma$, which verifies \eqref{asym-eq:lower-u}. Moreover, for each $R>\mathscr{R}$, let $x_R\in\partial B_1^{\hat{H}_0}$ be such that $\Gamma(R)$ attains its minimum at the point $Rx_R\in\partial B_R^{\hat{H}_0}$. Then $u_R(x_R)=\Gamma(R)$. Assume that $x_{R_j}\to x_\infty$ on $\partial B_1^{H_0}$ up to a subsequence. We derive $$u_\infty(x_\infty)=\lim_{R_j\to+\infty}u_{R_j}(x_{R_j})=\lim_{R_j\to+\infty}\Gamma(R_j)=-\gamma.$$
 Consequently, we conclude from the uniqueness of $-\gamma$ that $$u_{R}\to -\gamma\,\text{ in }\, C_{\mathrm{loc}}^1(\mathbb{R}^N\setminus\{0\}),\quad\text{as } R\to+\infty,$$ (since this holds for any sequence $R\to+\infty$ up to extracting a subsequence). In particular, it implies \eqref{asym-eq:lim-u}.

 We proceed to show that an upper counterpart of \eqref{asym-eq:lower-u} holds as well, i.e., there is a constant $C_4$ such that \begin{equation}\label{asym-eq:upper-u}
	u(x)\le C_4-\gamma\log \hat{H}_0(x)\quad\text{in }\mathbb{R}^N\setminus\overline{B_1}.
\end{equation}
To this aim, observe first that 
$$
\gamma>N
$$ 
as it follows by \eqref{asym-eq:lower-u} together with the fact $\int_{\mathbb{R}^N}e^u\,dx<+\infty$. Fix $0<\epsilon_0\ll 1$ such that $-\gamma_1:=-\gamma+\epsilon_0<-N$. Then \eqref{asym-eq:lim-u} yields that
\begin{equation}\label{asym-eq:upper2-u}
	u(x)\le C_5-\gamma_1\log\hat{H}_0(x)\quad\text{in }\mathbb{R}^N\setminus\overline{B_1},
\end{equation}
for some constant $C_5$. This improves the known upper estimate \eqref{pre-eq:u-upper}. Now, we further sharpen it to \eqref{asym-eq:upper-u} by constructing appropriate supersolutions to \eqref{in-eq:Lio} in an exterior domain and then using the comparison principle, as explained in the following.

In order to get a supersolution of \eqref{in-eq:Lio}, we introduce $\phi=\phi(t)$ which is a smooth function for $t>0$ such that $\phi':=\frac{d\phi}{dt}<0$. Setting $v(x)=\phi(\hat{H}_0(x))$ for $x\in\mathbb{R}^N\setminus\{0\}$, it is easily verified that $$-\Delta_N^H \,v=t^{1-N}\left(t^{N-1}(-\phi')^{N-1}\right)'.$$ Notice from \eqref{asym-eq:upper2-u} that $e^{u(x)}\le C_6\hat{H}_0^{-\gamma_1}(x)$ for $|x|>1$ and $C_6>0$ being a constant. Thus, given $t_0>0$, let us consider the ODE
 \begin{equation}\label{asym-eq:ODE}
 		t^{1-N}\left(t^{N-1}(-\phi')^{N-1}\right)'=t^{-\gamma_1}\quad\text{for }t>t_0.\\
 \end{equation}
By integration, it can be deduced that the function
\begin{equation*}
	\phi_{a,b}(t)=-(\gamma_1-N)^{\frac{1}{1-N}}\int_{t_0}^t\frac{(a-\tau^{N-\gamma_1})^{\frac{1}{N-1}}}{\tau}\,d\tau+b
\end{equation*}
solves \eqref{asym-eq:ODE} for any constants $a\ge (t_0)^{N-\gamma_1}$ and $b\in\mathbb{R}$.

Now, let $v_{a,b}(x)=C_6^{\frac{1}{N-1}}\phi_{a,b}(\hat{H}_0(x))$. When $t_0>\mu_2$ where $\mu_2$ is as in \eqref{asym-eq:bound-H0}, we have
\begin{equation}\label{asym-eq:v-u}
	-\Delta_N^H\,v_{a,b}=C_6\hat{H}_0^{-\gamma_1}\ge e^u=-\Delta_N^H\,u\quad\text{in }\mathbb{R}^N\setminus\overline{B_{t_0}^{\hat{H}_0}}.
\end{equation}
Also, we find that, for $x$ with $\hat{H}_0(x)\ge t_0$,
\begin{align}
	v_{a,b}(x)&\ge-\left(\frac{aC_6}{\gamma_1-N}\right)^{\frac{1}{N-1}}\int_{t_0}^{\hat{H}_0(x)} \frac{1}{\tau}\,d\tau+C_6^{\frac{1}{N-1}}b\notag\\
	&=-\left(\frac{aC_6}{\gamma_1-N}\right)^{\frac{1}{N-1}}\log \hat{H}_0(x)\,+ C_7+C_6^{\frac{1}{N-1}}b \label{asym-eq:C7}
\end{align}
where $C_7=C_7(a,t_0)$ is a constant. In particular, we have 
\begin{equation}\label{asym-eq:v-t0}
	v_{a,b}(x)= C_6^{\frac{1}{N-1}}b\quad\text{when }\hat{H}_0(x)=t_0.
\end{equation}
 With these, let us show by choosing suitable $a$ and $b$ that $v_{a,b}(x)\ge u(x)$ for $x$ satisfying $\hat{H}_0(x)\ge t_0$.

Indeed, for each $\epsilon\in(0,\epsilon_0)$, \eqref{asym-eq:lim-u} implies that there is $R(\epsilon)$ such that
\begin{equation}\label{asym-eq:R-e-u}
	u(x)\le (-\gamma+\epsilon)\log \hat{H}_0(x)\quad\text{for }|x|>R(\epsilon).
\end{equation}
Take $$a_\epsilon=\frac{(\gamma-\epsilon)^{N-1}(\gamma_1-N)}{C_6}$$
and notice that $$\frac{\gamma_1^{N-1}(\gamma_1-N)}{C_6}\le a_\epsilon\le\frac{\gamma^{N-1}(\gamma_1-N)}{C_6}.$$
Thus, by letting $t_0$ be sufficiently large (but still fixed), we have $a_\epsilon\ge 2(t_0)^{N-\gamma_1}$ for any $\epsilon\in(0,\epsilon_0)$. Also, it follows that the constant $C_7(a_\epsilon,t_0)$ as in \eqref{asym-eq:C7} is uniformly bounded. Now, fix $b$ such that
$$C_6^{\frac{1}{N-1}}b\ge\max\{-C_7(a_\epsilon,t_0)\,,\,\max_{\partial B_{t_0}^{\hat{H}_0}}u\}$$
holds for any $\epsilon\in(0,\epsilon_0)$. In view of \eqref{asym-eq:C7}, \eqref{asym-eq:v-t0} and \eqref{asym-eq:R-e-u}, we obtain, for each $\epsilon\in(0,\epsilon_0)$,
\begin{equation*}
	v_{a_\epsilon,b}(x)\ge u(x)\quad\text{when }\hat{H}_0(x)= t_0\text{ and when }|x|>R(\epsilon).
\end{equation*}
 Recall that \eqref{asym-eq:v-u} holds. By using the comparison principle, we thus infer that
 $$v_{a_\epsilon,b}(x)\ge u(x)\quad\text{on }\mathbb{R}^N\setminus B_{t_0}^{\hat{H}_0}$$
for each $\epsilon\in(0,\epsilon_0)$. 

Note that, for $\epsilon\in(0,\epsilon_0)$,
\begin{align*}
	v_{a_\epsilon,b}(x)-C_6^{\frac{1}{N-1}}b&=-\left(\frac{C_6}{\gamma_1-N}\right)^{\frac{1}{N-1}}\int_{t_0}^{\hat{H}_0(x)} \frac{(a_\epsilon-\tau^{N-\gamma_1})^{\frac{1}{N-1}}}{\tau}\,d\tau\\
	&=-\left(\frac{a_\epsilon C_6}{\gamma_1-N}\right)^{\frac{1}{N-1}}\int_{t_0}^{\hat{H}_0(x)} \frac{1}{\tau}\,d\tau+I\\
	&=(-\gamma+\epsilon)(\log\hat{H}_0(x)-\log t_0)+I
\end{align*}
where $$I=\left(\frac{C_6}{\gamma_1-N}\right)^{\frac{1}{N-1}}\int_{t_0}^{\hat{H}_0(x)} \frac{(a_\epsilon)^{\frac{1}{N-1}}-(a_\epsilon-\tau^{N-\gamma_1})^{\frac{1}{N-1}}}{\tau}\,d\tau.$$
Since for $\tau>t_0$, 
\begin{align*}
	0\le(a_\epsilon)^{\frac{1}{N-1}}-(a_\epsilon-\tau^{N-\gamma_1})^{\frac{1}{N-1}}&\le\frac{\tau^{N-\gamma_1}}{N-1}(a_\epsilon-\tau^{N-\gamma_1})^{\frac{2-N}{N-1}}\\
	&\le\frac{t_0^{\frac{(N-\gamma_1)(2-N)}{N-1}}}{N-1}\tau^{N-\gamma_1}
\end{align*}
by Bernoulli's inequality and the fact that $a_\epsilon\ge 2(t_0)^{N-\gamma_1}$, we deduce that $I\le C_8$ for some constant $C_8$ independent of $\epsilon$. Hence, we conclude that $$u(x)\le v_{a_\epsilon,b}(x)\le(-\gamma+\epsilon)(\log\hat{H}_0(x)-\log t_0)+C_8+C_6^{\frac{1}{N-1}}b$$
for $\hat{H}_0(x)\ge t_0$. By letting $\epsilon\to0$ in the above we obtain \eqref{asym-eq:upper-u}.

At this point, in view of \eqref{asym-eq:lower-u} and \eqref{asym-eq:upper-u}, we have arrived at 
\begin{equation}\label{asym-eq:log-u}
	u(x)=-\gamma\log \hat{H}_0(x)+G(x)\quad\text{for }|x|\ge1,
\end{equation}
where $G(x)\in L^\infty(\mathbb{R}^N\setminus B_1)$. Next, for $m\in \mathbb N$, we introduce the function $$u_m(y):=u(my)+\gamma\log m.$$
Then it follows from \eqref{asym-eq:log-u} that $$u_m(y)=-\gamma\log \hat{H}_0(y)+G_m(y)$$
and \begin{equation}\label{asym-eq:um-eq}
	-\Delta_N^H\,u_m=m^{N-\gamma}\frac{e^{G_m}}{\hat{H}_0^\gamma}
\end{equation}
where $G_m(y):=G(my)$. Observe that both $u_m$ and the right-hand side of \eqref{asym-eq:um-eq} are bounded in $L_{\mathrm{loc}}^\infty(\mathbb{R}^N\setminus\{0\})$, uniformly in $m$. Thus, by the regularity results in  \cite{Serrin1964,DiBen1983,Tolksdorf1984}, we infer that $u_m$ is uniformly bounded in $C^{1,\alpha}_{\mathrm{loc}}(\mathbb{R}^N\setminus\{0\})$. By Ascoli--Arzel\`a theorem and a diagonal process, we therefore have that $u_m\to U_\infty$ in $C_{\mathrm{loc}}^1(\mathbb{R}^N\setminus\{0\})$, along a subsequence of $m\to\infty$. Here $U_\infty$ satisfies $$\Delta_N^H\,U_\infty=0\quad\text{in }\mathbb{R}^N\setminus\{0\}.$$
Also, we can find $$U_\infty(y)=-\gamma\log \hat{H}_0(y)+G_\infty(y)$$
where $G_\infty$ is the limit of $G_m$ in $C_{\mathrm{loc}}^1(\mathbb{R}^N\setminus\{0\})$. The uniform boundedness of $G_m$ shows $G_\infty\in L^\infty(\mathbb{R}^N)$. Hence, Proposition \ref{sing-prop:rigid} forces that $G_\infty$ is a constant function.

Via \eqref{asym-eq:log-u}, we have
\begin{equation*}
	\sup_{|x|=m}|x|\left|\nabla\left(u(x)+\gamma\log \hat{H}_0(x)\right)\right|=\sup_{|y|=1}|\nabla G_m(y)|.
\end{equation*}
Since
\begin{equation*}
	\sup_{|y|=1}|\nabla G_m(y)|\to \sup_{|y|=1}|\nabla G_\infty(y)|=0
\end{equation*}
holds for any sequence $m\to\infty$ up to extracting a subsequence, we deduce that
\begin{equation}\label{asym-eq:asym-Du}
	\lim_{|x|\to\infty}|x|\left|\nabla\left(u(x)+\gamma\log \hat{H}_0(x)\right)\right|=0.
\end{equation}
Consequently, if we can show $\gamma=\gamma_0$, then the validity of \eqref{asym-eq:prop-log-u} and \eqref{asym-eq:prop-asym-Du} follows from \eqref{asym-eq:log-u} and \eqref{asym-eq:asym-Du}.

Indeed, by \eqref{asym-eq:asym-Du} we infer that
\begin{gather*}
	\nabla u=-\gamma\nabla(\log \hat{H}_0(x))+o(\hat{H}_0^{-1}(x))\\
	H(\nabla u)=\gamma\hat{H}_0^{-1}(x)+o(\hat{H}_0^{-1}(x))
\end{gather*}
uniformly for $x\in\partial B_R^{\hat{H}_0}$, as $R\to\infty$. By the divergence theorem, we have
\begin{align}
	\int_{B_R^{\hat{H}_0}}e^u\,dx&=\int_{\partial B_R^{\hat{H}_0}}H^{N-1}(\nabla u)\left<\nabla H(\nabla u),-\nu\right>d\mathcal{H}^{N-1}\notag\\
	&=\int_{\partial B_R^{\hat{H}_0}}H^{N-1}(\nabla u)\left<\nabla H(\nabla u),-\frac{\nabla \hat{H}_0(x)}{|\nabla \hat{H}_0(x)|}\right>d\mathcal{H}^{N-1}\notag\\
	&=\frac{R}{\gamma}\int_{\partial B_R^{\hat{H}_0}}H^{N-1}(\nabla u)\left<\nabla H(\nabla u),\frac{\nabla u+o(R^{-1})}{|\nabla \hat{H}_0(x)|}\right>d\mathcal{H}^{N-1}\notag\\
	&=\frac{R}{\gamma}\int_{\partial B_R^{\hat{H}_0}}H^{N-1}(\nabla u)\frac{H(\nabla u)+o(R^{-1})}{|\nabla \hat{H}_0(x)|}\,d\mathcal{H}^{N-1}\notag\\
	&=\int_{\partial B_R^{\hat{H}_0}}\left(\gamma R^{-1}+o(R^{-1})\right)^{N-1}(1+o(1))H(-\nu)\,d\mathcal{H}^{N-1}.\label{asym-eq:compute}
\end{align} 
Letting $R
\to\infty$ gives $$\int_{\mathbb{R}^N}e^u\,dx=\gamma^{N-1}\int_{\partial B_1^{\hat{H}_0}}H(-\nu)\,d\mathcal{H}^{N-1}=\gamma^{N-1}N\mathcal{L}^{N}(B_1^{\hat{H}_0}),$$
which implies $\gamma=\gamma_0$. This completes the proof.
\end{proof}

\section{Quantization result}\label{sec:quant}
Thanks to Proposition \ref{asym-prop:asym-u-Du}, in this section we obtain a quantization result by exploiting a Pohozaev type identity, which shows that any solution of \eqref{in-eq:Lio} actually attains the equality in \eqref{pre-eq:eu-lower}. 

\begin{proposition}\label{quan-pro}
	Let $u$ be a solution of \eqref{in-eq:Lio}. Then
	\begin{equation*}
		\int_{\mathbb{R}^N}e^u\,dx= N\left(\frac{N^2}{N-1}\right)^{N-1}\mathcal{L}^N(B_1^{\hat{H}_0}).
	\end{equation*}
\end{proposition}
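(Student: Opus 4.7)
The plan is to apply the Pohozaev-type identity of Lemma \ref{quant-lem:Pohozaev} on the Wulff ball $\Omega = B_R^{\hat{H}_0}$, pass to the limit $R \to \infty$ using the sharp asymptotics of Proposition \ref{asym-prop:asym-u-Du}, and combine the resulting scalar identity with the defining relation \eqref{asym-eq:def-gamma0} to pin down $\gamma_0 = N^2/(N-1)$, which immediately yields the claim. The choice $\Omega = B_R^{\hat{H}_0}$ is crucial: the outer unit normal on $\partial\Omega$ is parallel to $\nabla\hat{H}_0$, so the $1$-homogeneity identity $\langle x, \nabla\hat{H}_0(x)\rangle = \hat{H}_0(x) = R$ and the dual-norm identity $H(-\nabla\hat{H}_0(x)) = 1$ render every boundary integrand explicit in terms of $R$ alone.

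Concretely, testing $-\Delta_N^H u = e^u$ against $\langle x,\nabla u\rangle$ and integrating by parts on $B_R^{\hat{H}_0}$ should produce an identity of the form
\[
N\int_{B_R^{\hat{H}_0}} e^u\,dx = \int_{\partial B_R^{\hat{H}_0}} e^u\,\langle x,\nu\rangle\,d\mathcal{H}^{N-1} + \int_{\partial B_R^{\hat{H}_0}} H^{N-1}(\nabla u)\,\langle\nabla H(\nabla u),\nu\rangle\,\langle x,\nabla u\rangle\,d\mathcal{H}^{N-1} - \frac{1}{N}\int_{\partial B_R^{\hat{H}_0}} H^N(\nabla u)\,\langle x,\nu\rangle\,d\mathcal{H}^{N-1}.
\]
Proposition \ref{asym-prop:asym-u-Du} yields, uniformly on $\partial B_R^{\hat{H}_0}$,
\[
\nabla u = -\frac{\gamma_0}{R}\nabla\hat{H}_0 + o(R^{-1}),\qquad H(\nabla u) = \frac{\gamma_0}{R} + o(R^{-1}),\qquad \langle x,\nabla u\rangle = -\gamma_0 + o(1),
\]
while $e^u = O(R^{-\gamma_0})$ with $\gamma_0 > N$, so the first boundary integral is $O(R^{N-\gamma_0}) = o(1)$. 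For the second integral, the divergence theorem applied to $H^{N-1}(\nabla u)\nabla H(\nabla u)$ on $B_R^{\hat{H}_0}$ gives $\int_{\partial B_R^{\hat{H}_0}} H^{N-1}(\nabla u)\langle\nabla H(\nabla u),\nu\rangle\,d\mathcal{H}^{N-1} \to -\int_{\mathbb{R}^N} e^u\,dx$; multiplication by the uniform limit $-\gamma_0$ of $\langle x,\nabla u\rangle$ therefore produces the limit $\gamma_0\int_{\mathbb{R}^N} e^u\,dx$. For the third integral, the coarea identity $\int_{\partial B_R^{\hat{H}_0}} |\nabla\hat{H}_0|^{-1}\,d\mathcal{H}^{N-1} = NR^{N-1}\mathcal{L}^N(B_1^{\hat{H}_0})$, together with $\langle x,\nu\rangle = R/|\nabla\hat{H}_0|$ on $\partial B_R^{\hat{H}_0}$, yields the limit $N\gamma_0^N\mathcal{L}^N(B_1^{\hat{H}_0})$.

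Passing to the limit $R\to\infty$ in the Pohozaev identity therefore collapses it to $N\int_{\mathbb{R}^N} e^u\,dx = \gamma_0\int_{\mathbb{R}^N} e^u\,dx - \gamma_0^N\mathcal{L}^N(B_1^{\hat{H}_0})$; substituting $\int_{\mathbb{R}^N} e^u\,dx = N\gamma_0^{N-1}\mathcal{L}^N(B_1^{\hat{H}_0})$ from \eqref{asym-eq:def-gamma0} and dividing through by $\gamma_0^{N-1}\mathcal{L}^N(B_1^{\hat{H}_0})$ reduces the identity to $N(\gamma_0 - N) = \gamma_0$, hence $\gamma_0 = N^2/(N-1)$, which plugged back into the definition of $\gamma_0$ gives exactly the stated value of $\int_{\mathbb{R}^N} e^u\,dx$. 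The main technical point I expect is to verify rigorously that the $o(1)$ error in $\langle x,\nabla u\rangle$ contributes negligibly when integrated against $H^{N-1}(\nabla u)\langle\nabla H(\nabla u),\nu\rangle$ (whose total $L^1$-mass on $\partial B_R^{\hat{H}_0}$ is $O(1)$ by the asymptotics), and analogously that the $o(R^{-N})$ correction in $H^N(\nabla u)$ produces only an $o(1)$ contribution after integration on $\partial B_R^{\hat{H}_0}$; both are essentially routine but attentive pieces of bookkeeping, where the anisotropy of $\hat{H}_0$ must be handled with some care.
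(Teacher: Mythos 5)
Your proposal is correct and follows essentially the same route as the paper: apply the Pohozaev identity of Lemma \ref{quant-lem:Pohozaev} on $B_R^{\hat{H}_0}$ with $y=0$, combine it with the divergence theorem to obtain \eqref{quant-eq:Pohozaev-u}, send $R\to\infty$ using the asymptotics of Proposition \ref{asym-prop:asym-u-Du} (the $e^u$ surface term vanishes since $\gamma_0>N$), and then solve the resulting scalar relation together with \eqref{asym-eq:def-gamma0} for $\gamma_0=N^2/(N-1)$. The only cosmetic difference is that you evaluate the limit of the flux term $\int_{\partial B_R^{\hat{H}_0}}H^{N-1}(\nabla u)\langle\nabla H(\nabla u),\nu\rangle\langle x,\nabla u\rangle\,d\mathcal{H}^{N-1}$ by factoring out the uniform limit of $\langle x,\nabla u\rangle$ and applying the divergence theorem, whereas the paper plugs the asymptotics directly into the integrand as in \eqref{asym-eq:compute}; both give $(N-1)\gamma_0^N\mathcal{L}^N(B_1^{\hat{H}_0})$ for the right-hand side.
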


\begin{proof}
	Let $R>0$ and let $y \in \mathbb{R}^N$ be fixed. The divergence theorem yields
	\begin{equation*}
	 \int_{\partial B_{R}^{\hat{H}_0}}e^u\left<x-y,\nu\right>d\mathcal{H}^{N-1}(x)=\int_{B_{R}^{\hat{H}_0}}e^u\left<x-y,\nabla u\right>dx+N\int_{B_{R}^{\hat{H}_0}}e^u\,dx,
	\end{equation*}
where $\nu$ is the unit outer normal to $\partial B_{R}^{\hat{H}_0}$. Then by applying Lemma \ref{quant-lem:Pohozaev} below to equation \eqref{in-eq:Lio}, we deduce that the following identity
	\begin{align}
		&N\int_{B_{R}^{\hat{H}_0}}e^u\,dx-\int_{\partial B_{R}^{\hat{H}_0}}e^u\left<x-y,\nu\right>d\mathcal{H}^{N-1}(x)\notag\\
		=&\int_{\partial B_{R}^{\hat{H}_0}}\left[H^{N-1}(\nabla u)\left<\nabla H(\nabla u),\nu\right>\left<x-y,\nabla u\right>-\frac{H^N(\nabla u)}{N}\left<x-y,\nu\right>\right]d\mathcal{H}^{N-1}(x)\label{quant-eq:Pohozaev-u}
	\end{align}
	 holds.
	 
	Via \eqref{asym-eq:prop-asym-Du}, we see that
	\begin{equation*}
		H(\nabla u)=\gamma_0 \hat{H}_0^{-1}(x)+o(\hat{H}_0^{-1}(x))\quad\text{and}\quad
		\left<x,\nabla u\right>=-\gamma_0+o(1),
	\end{equation*}
	uniformly for $x\in\partial B_{R}^{\hat{H}_0}$, as $R\to\infty$. Hence, as done in \eqref{asym-eq:compute}, we infer that the right-hand side of \eqref{quant-eq:Pohozaev-u} with $y=0$ goes to $$\gamma_0^N(N-1)\mathcal{L}^N(B_1^{\hat{H}_0})$$ as $R\to\infty$. On the other hand, by \eqref{asym-eq:prop-log-u} one has $$e^u\leq C\hat{H}_0^{-\gamma_0}(x)$$ for 
	some constant $C$, as $|x|\to\infty$. Since $\gamma_0\geq\frac{N^2}{N-1}>N$ by \eqref{pre-eq:eu-lower}, we obtain that $$\int_{\partial B_{R}^{\hat{H}_0}}e^u\left<x,\nu\right>d\mathcal{H}^{N-1}\leq CR^{N-\gamma_0}\to 0,\quad\text{as }R\to\infty.$$
 
 As a consequence, by letting $R\to\infty$ in \eqref{quant-eq:Pohozaev-u} with $y=0$ we conclude that 
 $$N\int_{\mathbb{R}^N}e^u\,dx=\gamma_0^N(N-1)\mathcal{L}^N(B_1^{\hat{H}_0}).$$
The assertion is thus proved by recalling \eqref{asym-eq:def-gamma0}.
\end{proof}

In the above proof, we have used the following identity of Pohozaev type. Note that in the Euclidean case, such an identity has been established in \cite{DFSV2009,IT2012}.
\begin{lemma}\label{quant-lem:Pohozaev}
	Let $\Omega\subset\mathbb{R}^N$ be a bounded open set with Lipschitz boundary and let $f\in L^1(\Omega)\cap L_{\mathrm{loc}}^\infty(\Omega)$. Assume that $p>1$ and $u\in C^1(\overline{\Omega})$ satisfies
	\begin{equation*}
		-\Delta_p^Hu=f\quad\text{in }\Omega.
	\end{equation*}
	Then \begin{align*}
		&\frac{p-N}{p}\int_{\Omega}H^p(\nabla u)\,dx-\int_{\Omega}f(x)\left<x-y,\nabla u\right>dx\\
		=&\int_{\partial\Omega}\left[H^{p-1}(\nabla u)\left<\nabla H(\nabla u),\nu\right>\left<x-y,\nabla u\right>-\frac{H^p(\nabla u)}{p}\left<x-y,\nu\right>\right]d\mathcal{H}^{N-1}(x)
	\end{align*}
for any $y\in\mathbb{R}^N$, where $\nu$ is the unit outer normal to $\partial\Omega$.
\end{lemma}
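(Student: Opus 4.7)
The plan is to deduce the identity from the divergence theorem applied to the continuous vector field
$$V_i(x):= H^{p-1}(\nabla u)H_{\xi_i}(\nabla u)\langle x-y,\nabla u\rangle - \tfrac{1}{p}H^{p}(\nabla u)(x_i-y_i),$$
whose outward flux through $\partial\Omega$ coincides exactly with the boundary integral in the lemma. Since $u\in C^{1}(\overline\Omega)$ and $\xi\mapsto H^{p-1}(\xi)\nabla H(\xi)$ extends continuously through the origin for $p>1$, one has $V\in C(\overline\Omega)$, and the Gauss--Green formula on the Lipschitz domain $\Omega$ applies provided $\mathrm{div}\,V\in L^{1}(\Omega)$. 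The lemma therefore reduces to the pointwise/distributional formula
$$\mathrm{div}\,V = \tfrac{p-N}{p}H^{p}(\nabla u) - f\,\langle x-y,\nabla u\rangle\quad\text{in }\Omega.$$

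If $u$ were $C^{2}(\Omega)$, this identity would be immediate. Expanding $\mathrm{div}\,V$ by the product rule and using (i) the PDE $\partial_i(H^{p-1}H_{\xi_i}(\nabla u))=\Delta_p^H u=-f$, (ii) Euler's relation $H^{p-1}H_{\xi_i}\partial_i u = H^{p}(\nabla u)$, and (iii) the chain rule $\partial_i H^{p}(\nabla u) = pH^{p-1}H_{\xi_j}(\nabla u)\partial_{ij}u$, one obtains
$$\mathrm{div}\,V = -f\langle x-y,\nabla u\rangle + \tfrac{p-N}{p}H^{p}(\nabla u) + H^{p-1}H_{\xi_i}(x_j-y_j)\partial_{ij}u - H^{p-1}H_{\xi_j}(x_i-y_i)\partial_{ij}u.$$
The two second-derivative cross-terms cancel exactly: after relabeling the summation indices $i\leftrightarrow j$ and invoking $\partial_{ij}u=\partial_{ji}u$ they are equal in magnitude with opposite signs. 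Integrating and rearranging yields the identity in the $C^{2}$ case.

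To remove the $C^{2}$ hypothesis I would regularize the operator: for $\epsilon>0$ let $u_\epsilon$ solve the uniformly elliptic problem
$$-\mathrm{div}\bigl((H^{2}(\nabla u_\epsilon)+\epsilon)^{(p-2)/2}H(\nabla u_\epsilon)\nabla H(\nabla u_\epsilon)\bigr)=f\ \text{in }\Omega,\quad u_\epsilon=u\ \text{on }\partial\Omega.$$
The $\epsilon$-term makes the coefficients smooth and uniformly elliptic, so Schauder theory (using $f\in L^{\infty}_{\mathrm{loc}}$) gives $u_\epsilon\in C^{2,\alpha}_{\mathrm{loc}}(\Omega)$, and the calculation above yields a Pohozaev identity for $u_\epsilon$ with $H^{p}$ replaced by $(H^{2}+\epsilon)^{p/2}$. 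Uniform $C^{1,\alpha}_{\mathrm{loc}}$ estimates of DiBenedetto/Tolksdorf/Lieberman type, stable as $\epsilon\to 0$, give $u_\epsilon\to u$ in $C^{1}_{\mathrm{loc}}$; exhausting $\Omega$ by smooth subdomains $\Omega^{(\delta)}\Subset\Omega$ and passing to the double limit $\epsilon\to 0$ then $\delta\to 0$, the volume integrals converge by dominated convergence (as $\nabla u_\epsilon$ is uniformly bounded and $f\in L^{1}$) and the boundary integrals by the uniform convergence of $\nabla u_\epsilon$ on $\partial\Omega^{(\delta)}$. The main obstacle is precisely this last step -- obtaining enough control on $\nabla u_\epsilon$ near $\partial\Omega$ so that the boundary integrals pass to the limit when $u$ is only $C^{1}$ -- which is the standard technical subtlety in establishing Pohozaev identities for degenerate quasilinear equations, well-understood in the spirit of \cite{DFSV2009,IT2012}.
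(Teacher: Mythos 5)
Your pointwise $C^{2}$ computation is correct and matches the structure of the argument in the paper, but the strategy you propose for removing the $C^{2}$ hypothesis has a genuine gap. The $\epsilon$-regularization you introduce,
$$-\mathrm{div}\bigl((H^{2}(\nabla u_\epsilon)+\epsilon)^{(p-2)/2}H(\nabla u_\epsilon)\nabla H(\nabla u_\epsilon)\bigr)=f,$$
removes the degeneracy of the ellipticity \emph{constants}, but it does \emph{not} smooth the anisotropy. The coefficient map $\xi\mapsto(H^{2}(\xi)+\epsilon)^{(p-2)/2}H(\xi)\nabla H(\xi)$ inherits exactly the regularity of $H$ and $H^{2}$, which under the standing assumptions is $C^{1}$ away from $\xi=0$ and only $C^{0,1}$ through the origin (since $H\nabla H=\tfrac12\nabla(H^{2})$ with $H^{2}\in C^{1,1}$). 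Therefore Schauder theory cannot yield $u_\epsilon\in C^{2,\alpha}_{\mathrm{loc}}$ on the set where $\nabla u_\epsilon=0$; the operator is uniformly elliptic there but its coefficients are merely Lipschitz in the gradient variable, so the $W^{2,2}_{\mathrm{loc}}$ regularity needed to justify your pointwise divergence computation at those points is not available. In the Euclidean case $H(\xi)=|\xi|$ this regularization does work because $H^{2}$ is smooth, which is probably why it looks plausible, but this is precisely the place where the anisotropic setting is genuinely harder. (There is a secondary issue: you only have $f\in L^{\infty}_{\mathrm{loc}}$, not $C^{\alpha}_{\mathrm{loc}}$, so even in the smooth-coefficient case Schauder only gives $C^{1,\alpha}$; you would at best get $W^{2,q}_{\mathrm{loc}}$ by Calder\'on--Zygmund, which suffices for a distributional identity but needs to be said.)

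The paper's proof sidesteps regularization altogether. It invokes the recent regularity result of Antonini--Ciraolo--Farina (cited as Theorem 1.1 in the reference labelled ACF2021), which asserts that for the original (non-regularized) solution one has $H^{p-1}(\nabla u)\nabla H(\nabla u)\in W^{1,2}_{\mathrm{loc}}(\Omega)$, i.e.\ the ``stress field'' is a Sobolev function even across the critical set $Z=\{\nabla u=0\}$. With this in hand, the distributional identity $\mathrm{div}\,\mathcal V=\tfrac{p-N}{p}H^{p}(\nabla u)-f\langle x-y,\nabla u\rangle$ is proved by a cut-off argument: one multiplies the test function by $\psi_\delta=\min\{[(H^{p-1}(\nabla u)-\delta^{p-1})^{+}]/\delta^{p-1},1\}$, verifies the pointwise identity on $\Omega\setminus Z$ (where $u\in W^{2,2}_{\mathrm{loc}}$ by standard elliptic theory), and shows that the contribution of $(1-\psi_\delta)\varphi$ vanishes as $\delta\to0$ using the boundedness of $\int|\nabla(H^{p-1}(\nabla u))|$. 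Finally it passes from the distributional divergence identity to the boundary integral via a Gauss--Green lemma for continuous vector fields with $L^{1}$ distributional divergence (their Lemma 4.3 in CL2022). If you want to salvage your regularization strategy you would essentially have to re-prove the $W^{1,2}$ stress-field regularity uniformly in $\epsilon$, which is more work than invoking it directly; I would recommend adopting the cut-off approach instead.
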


\begin{proof}
Given $y\in\mathbb{R}^N$, we first show that
\begin{align}
	&\mathrm{div}\left(\left<x-y,\nabla u\right>H^{p-1}(\nabla u)\nabla H(\nabla u)-\frac{x-y}{p}H^p(
	\nabla u)\right)\notag\\
	=&\frac{p-N}{p}H^p(\nabla u)-f(x)\left<x-y,\nabla u\right>\label{quant-eq:loc-Poh}
\end{align}
holds in the sense of distributions in $\Omega$, that is 
\begin{align}
	&\int_{\Omega}\left[H^{p-1}(\nabla u)\left<\nabla H(\nabla u),\nabla\varphi\right>\left<x-y,\nabla u\right>-\frac{H^p(
		\nabla u)}{p}\left<x-y,\nabla\varphi\right>\right]dx\notag\\
	=&\int_{\Omega}f(x)\varphi(x)\left<x-y,\nabla u\right>dx-\frac{p-N}{p}\int_{\Omega}H^p(\nabla u)\varphi\,dx\label{quant-eq:dis-Poh}
\end{align}
for every $\varphi\in C_c^\infty(\Omega)$.

From \cite[Theorem 1.1]{ACF2021} one has $$H^{p-1}(\nabla u)\nabla H(\nabla u)\in W_{\mathrm{loc}}^{1,2}(\Omega).$$
Since $H_0(H^{p-1}(\nabla u)\nabla H(\nabla u))=H^{p-1}(\nabla u)$ by \cite[Formula (2.2)]{CL2022} and $H_0$ is Lipschitz continuous on $\mathbb{R}^N$, the chain rule entails $$H^{p-1}(\nabla u)\in W_{\mathrm{loc}}^{1,2}(\Omega).$$
For $\delta>0$ and $x\in\Omega$, we let 
$$
\psi_{\delta}(x):=\min\{\frac{[H^{p-1}(\nabla u(x))-\delta^{p-1}]^+}{\delta^{p-1}},1\}.
$$ 
It then follows that $\psi_\delta(x)\in W_{\mathrm{loc}}^{1,2}(\Omega)\cap C(\overline{\Omega})$. By setting 
\begin{equation} \label{Udelta}
U^\delta:=\{x\in\Omega:H(\nabla u(x))\geq2^{\frac{1}{p-1}}\delta\}
\end{equation}
and 
\begin{equation*} 
Z:=\{x\in\Omega:\nabla u(x)=0\} \,,
\end{equation*}
we have
\begin{equation*}
	\psi_{\delta}(x)=
	\begin{cases}
			1 &  \text{if }x\in U^\delta \\
		0 & \text{if }x\in Z,
	\end{cases}
\end{equation*}
and we find that $\psi_\delta(x)\to\psi_0(x)$ as $\delta\to0$ for every $x\in\Omega$, where 
\begin{equation*}
	\psi_0(x):=
	\begin{cases}
			1 &  \text{if }x\in \Omega\setminus Z\\
		0 & \text{if }x\in Z \,.
	\end{cases}
\end{equation*}

Given $\varphi\in C_c^\infty(\Omega)$, we can decompose $\varphi$ as the sum 
\begin{equation}\label{quant-eq:phi-decompose}
	\varphi=\psi_\delta\varphi+(1-\psi_\delta)\varphi
\end{equation}
with $\psi_\delta\varphi \in W_{0}^{1,2}(\Omega\setminus Z)$ and $(1-\psi_\delta)\varphi\in W_0^{1,2}(\Omega\setminus U^{2\delta})$; here $U^{2\delta}$ is as defined in \eqref{Udelta}. By standard regularity theory for elliptic equations, one has  $u\in W_{\mathrm{loc}}^{2,2}(\Omega\setminus Z)$. Then a direct computation using the product rule of Sobolev functions shows that \eqref{quant-eq:loc-Poh} holds pointwise in the domain $\Omega\setminus Z$. Hence, we see that \eqref{quant-eq:dis-Poh} is valid for $\psi_\delta\varphi$ in place of $\varphi$. 

On the other hand, we set 
$$
\mathcal{V}(x):=\left<x-y,\nabla u\right>H^{p-1}(\nabla u)\nabla H(\nabla u)-\frac{x-y}{p}H^p(
\nabla u) \,,
$$ 
and for $\delta\in(0,1)$ we have
\begin{align*}
	&\int_{\Omega}\left<\mathcal{V},\nabla((1-\psi_\delta)\varphi)\right>dx\notag\\
	\lesssim &\int_{\Omega\setminus U^\delta}H^p(\nabla u)|x-y|\left(|\nabla\varphi|+|\varphi||\nabla\psi_\delta|\right)dx\notag\\
	\lesssim&\,\delta\int_{\Omega\setminus U^\delta}\delta^{-p}H^p(\nabla u)|x-y|\left(\delta^{p-1}|\nabla\varphi|+\delta^{p-1}|\varphi||\nabla\psi_\delta|\right)dx\notag\\
	\lesssim&\,\delta\int_{\Omega\setminus U^\delta}|x-y|\left(|\nabla\varphi|+\delta^{p-1}|\varphi||\nabla\psi_\delta|\right)dx\\
	\lesssim&\,\delta\int_{\Omega\setminus U^\delta}|x-y|\left(|\nabla\varphi|+|\varphi||\nabla(H^{p-1}(\nabla u))|\right)dx,
\end{align*}
which implies
\begin{equation*}
	\int_{\Omega}\left<\mathcal{V},\nabla((1-\psi_\delta)\varphi)\right>dx\to 0\quad\text{as }\delta\to 0.
\end{equation*}

 Thus, by \eqref{quant-eq:phi-decompose} we conclude that 
\begin{align*}
	\int_{\Omega}\left<\mathcal{V},\nabla\varphi\right>dx=&\lim_{\delta\to 0}\int_{\Omega}\left<\mathcal{V},\nabla(\psi_{\delta}\varphi)+\nabla((1-\psi_\delta)\varphi)\right>dx\\
	=&\lim_{\delta\to 0}\int_{\Omega}\left<\mathcal{V},\nabla(\psi_{\delta}\varphi)\right>dx\\
	=&\lim_{\delta\to 0}\int_{\Omega}\left[f(x)\left<x-y,\nabla u\right>-\frac{p-N}{p}H^p(\nabla u)\right]\psi_\delta\varphi\,dx\\
	=&\int_{\Omega}\left[f(x)\left<x-y,\nabla u\right>-\frac{p-N}{p}H^p(\nabla u)\right]\varphi\,dx.
\end{align*}
This proves \eqref{quant-eq:dis-Poh}. 

Now, since \eqref{quant-eq:loc-Poh} holds in the sense of distributions, the desired identity follows by applying \cite[Lemma 4.3]{CL2022}.
\end{proof}

\section{Proof of Theorem \ref{thm:main}}\label{sec:pf}
The proof is mainly based on the characterization of minimizers of the anisotropic isoperimetric inequality and H\"older inequality which, together with Proposition \ref{quan-pro},  enable us to deduce that all the superlevel sets of the solution $u$ are concentric Wulff shapes associated with $\hat{H}_0$ and that $H(\nabla u)$ is a positive constant on each level set of $u$. These rigidity properties will force $u$ to have a specific form \eqref{in-eq:u} via an ODE argument as in \cite{Esposito2018}.
 
\begin{proof}[Proof of Theorem \ref{thm:main}]
	Since the equality sign in \eqref{pre-eq:eu-lower} holds by Proposition \ref{quan-pro}, we deduce from the proof of Lemma \ref{pre-lem:eu-lower} that we are actually having equalities in \eqref{pre-eq:Hol-ineq} and \eqref{pre-eq:iso-ineq}. Thus, by the characterization of equality cases of the anisotropic isoperimetric inequality and H\"older inequality, we conclude that for a.e. $t<t_0$
	\begin{itemize}
		\item[(i)] $\Omega_t=B_{R(t)}^{\hat{H}_0}(x(t))$ for some $R(t)>0$ and $x(t)\in\mathbb{R}^N$,
		\item[(ii)] $\frac{H^N(\nabla u)}{|\nabla u|}$ is a multiple of $\frac{1}{|\nabla u|}$ on $\partial\Omega_t$.
	\end{itemize}
Here, $\Omega_t$ is given by \eqref{pre-eq:superlevel-u}. 

We first claim that (i) holds for any $t<t_0$. Indeed, let us denote by $\Theta$ the set of $t\in(-\infty,t_0)$ for which (i) holds. Given $t<t_0$, we can find a sequence $t_n\in\Theta$ such that $t_n\searrow t$ and $$\Omega_t=\bigcup_{t_n}\Omega_{t_n}.$$
Since $\Omega_{t_n}=B_{R(t_n)}^{\hat{H}_0}(x(t_n))$, we infer that there are also  $R(t)>0$ and $x(t)\in\mathbb{R}^N$ such that $\Omega_t=B_{R(t)}^{\hat{H}_0}(x(t))$.\footnote{Notice that $\Omega_t$ is a minimizer of the anisotropic isoperimetric inequality, as it follows by observing that $P_{\hat{H}}(\Omega_t;\mathbb{R}^N)\leq\liminf P_{\hat{H}}(\Omega_{t_n};\mathbb{R}^N)$, where $P_{\hat{H}}(\Omega; \mathbb{R}^N)$ denotes the anisotropic perimeter of a measurable set $\Omega\subset\mathbb{R}^N$ with respect to $\hat{H}$ (see for instance \cite[Formula (2.8)]{CL2022} with $D=\mathbb{R}^N$ there).} This proves the claim.

By (ii) and the continuity of $\nabla u$, we infer that $H(\nabla u)$ is a positive constant on $\partial\Omega_t$ for any $t<t_0$. 

Next, we shall derive the explicit representation of the radius $R(t)$ appearing above. 

Set 
$$
M(t)=\int_{\Omega_t}e^u\,dx \,.
$$
By virtue of \eqref{pre-eq:level-eu} and \eqref{pre-eq:iso-ineq} (where now the equality sign holds), we have that 
\begin{align}
	M(t)&=H^{N-1}(\nabla u)\int_{\partial B_{R(t)}^{\hat{H}_0}(x(t))}H(-\nu)\,d\mathcal{H}^{N-1}\notag\\
	&=NH^{N-1}(\nabla u)\mathcal{L}^N(B_1^{\hat{H}_0})R^{N-1}(t)\label{pf-eq:M-R}
\end{align}
 for any $t<t_0$ and that
\begin{align}
	\frac{d}{dt}M^{\frac{N}{N-1}}(t)&=\frac{N}{N-1}M^{\frac{1}{N-1}}(t)M'(t)\notag\\
	&=-\frac{N^2e^t}{N-1}(N)^{\frac{1}{N-1}}\left[\mathcal{L}^N(B_1^{\hat{H}_0})\right]^{\frac{N}{N-1}}R^N(t)\label{pf-eq:D-M}
\end{align}
for a.e. $t<t_0$. Moreover, by using the identity \eqref{quant-eq:Pohozaev-u} with $B_R^{\hat{H}_0}$ replaced by $\Omega_t=B_{R(t)}^{\hat{H}_0}(x(t))$ and with $y=x(t)$, we deduce that 
\begin{equation}\label{pf-eq:Poho-M}
	M(t)=e^t\mathcal{L}^N(B_1^{\hat{H}_0})R^N(t)+\frac{N-1}{N}H^N(\nabla u)\mathcal{L}^N(B_1^{\hat{H}_0})R^N(t).
\end{equation}
Here we have used the homogeneity of $\hat{H}_0$ and the fact that $$u(z)=t\quad\text{and}\quad\nu(z)=-\frac{\nabla u(z)}{|\nabla u(z)|}=\frac{\nabla\hat{H}_0(z-x(t))}{|\nabla\hat{H}_0(z-x(t))|}\quad\text{for } z\in\partial\Omega_t,$$
where $\nu$ is the unit outer normal to $\partial\Omega_t$.

Combining \eqref{pf-eq:M-R} and \eqref{pf-eq:Poho-M} yields 
\begin{equation}\label{pf-eq:eR-M}
	e^tR^N(t)=\frac{M(t)}{\mathcal{L}^N(B_1^{\hat{H}_0})}-\frac{N-1}{N}\left[\frac{M(t)}{N\mathcal{L}^N(B_1^{\hat{H}_0})}\right]^{\frac{N}{N-1}}.
\end{equation}
 Inserting this into \eqref{pf-eq:D-M}, one gets
\begin{equation}\label{pf-eq:d-M-t}
	\frac{N}{N-1}M'(t)=M(t)-\frac{N^2}{N-1}(N)^{\frac{1}{N-1}}\left[\mathcal{L}^N(B_1^{\hat{H}_0})\right]^{\frac{1}{N-1}}M^{\frac{N-2}{N-1}}
\end{equation}
for a.e. $t<t_0$. Note that $M(t_0)=0$. Then we integrate \eqref{pf-eq:d-M-t} and find
\begin{equation}\label{pf-eq:def-M}
	M(t)=\left[\kappa_N\left(1-e^{\frac{t-t_0}{N}}\right)\right]^{N-1}
\end{equation}
for any $t\leq t_0$, where $\kappa_N$ is given by \eqref{pre-eq:def-kappa}. Hence, by combining \eqref{pf-eq:eR-M} and \eqref{pf-eq:def-M} we derive that
\begin{equation}\label{pf-eq:R}
	R^N(t)=N\left(\frac{N^2}{N-1}\right)^{N-1}\left(1-e^{\frac{t-t_0}{N}}\right)^{N-1}e^{-\frac{(N-1)t+t_0}{N}}
\end{equation}
for any $t\leq t_0$.

Now we show that the center $x(t)$ of $B_{R(t)}^{\hat{H}_0}(x(t))$ does not depend on $t$, i.e. there is $x_0\in\mathbb{R}^N$ such that $x(t)\equiv x_0$ for $t<t_0$. Since $$B_{R(t)}^{\hat{H}_0}(x(t))=\Omega_t\subset \Omega_s=B_{R(s)}^{\hat{H}_0}(x(s))$$ for $t>s$, we deduce that
$$\hat{H}_0(x(t)-x(s))\leq C |R(t)-R(s)|\quad\text{for any }t,s<t_0$$
where $C>0$ is a constant depending only on $H_0$.
Thus, $x(t)$ is locally Lipschitz continuous on $(-\infty,t_0)$. 

Assume that there exists $t_1<t_0$ such that $x_1:=\frac{d}{dt}x(t_1)$ exists and does not vanish. For $t<t_0$, let 
$$P(t)=x(t)+R(t)\tau^+\quad\text{and}\quad Q(t)=x(t)+R(t)\tau^{-}$$
where $\tau^+:=\frac{x_1}{\hat{H}_0(x_1)}$ and $\tau^{-}:=\frac{-x_1}{H_0(x_1)}$.
Since $P(t),Q(t)\in\partial\Omega_t$, $u(P(t))=u(Q(t))=t$ and 
\begin{gather}
	\nu(P(t))=-\frac{\nabla u(P(t))}{|\nabla u(P(t))|}=\frac{\nabla\hat{H}_0(P(t)-x(t))}{|\nabla\hat{H}_0(P(t)-x(t))|}=\frac{\nabla\hat{H}_0(\tau^+)}{|\nabla\hat{H}_0(\tau^+)|}\label{pf-eq:nu-P}\\
	\nu(Q(t))=-\frac{\nabla u(Q(t))}{|\nabla u(Q(t))|}=\frac{\nabla\hat{H}_0(Q(t)-x(t))}{|\nabla\hat{H}_0(Q(t)-x(t))|}=\frac{\nabla\hat{H}_0(\tau^{-})}{|\nabla\hat{H}_0(\tau^{-})|}.\label{pf-eq:nu-Q}
\end{gather}
From the homogeneity of $H$ and \cite[Lemma 2.2]{CL2022}, one has
 \begin{gather}
 	H(-\nu(P(t)))=\frac{H(\nabla u(P(t)))}{|\nabla u(P(t))|}=\frac{1}{|\nabla\hat{H}_0(\tau^{+})|},\label{pf-eq:hH-nu-P}\\
 	H(-\nu(Q(t)))=\frac{H(\nabla u(Q(t)))}{|\nabla u(Q(t))|}=\frac{1}{|\nabla\hat{H}_0(\tau^{-})|}.\label{pf-eq:hH-nu-Q}
 \end{gather}
 Then, since $\langle\nabla\hat{H}_0(\xi),\xi\rangle=\hat{H}_0(\xi)$ for $\forall\,\xi\in\mathbb{R}^N$, we can infer from \eqref{pf-eq:nu-P}--\eqref{pf-eq:hH-nu-Q} that $$\left<\nabla u(P(t)),\tau^+\right>=-H(\nabla u(P(t)))$$
 and $$\left<\nabla u(Q(t)),\tau^{-}\right>=-H(\nabla u(Q(t))).$$
Hence, we get
\begin{align}
1=\frac{d}{dt}u(P(t))\Big\vert_{t=t_1}&=\left<\nabla u(P(t_1)),\tau^+\right>\left[\hat{H}_0(x_1)+R'(t_1)\right]\notag\\
&=-H(\nabla u(P(t_1)))\left[\hat{H}_0(x_1)+R'(t_1)\right]\label{pf-eq:duP}
\end{align}
and 
\begin{align}
	1=\frac{d}{dt}u(Q(t))\Big\vert_{t=t_1}&=\left<\nabla u(Q(t_1)),\tau^{-}\right>\left[-H_0(x_1)+R'(t_1)\right]\notag\\
	&=-H(\nabla u(Q(t_1)))\left[-H_0(x_1)+R'(t_1)\right].\label{pf-eq:duQ}
\end{align}
Noting $H(\nabla u(P(t_1)))=H(\nabla u(Q(t_1)))\neq0$, we conclude from \eqref{pf-eq:duP} and \eqref{pf-eq:duQ} that
$$H_0(x_1)+\hat{H}_0(x_1)=0,$$
which contradicts $x_1\neq0$. Consequently, we have $$x(t)\equiv x_0\quad\text{for } t<t_0.$$

Hence, we have shown that given $x\in\mathbb{R}^N$, there exists a unique $t=t(x)\leq t_0$ such that $u(x)=t$ and $\hat{H}_0(x-x_0)=R(t)$. To conclude \eqref{in-eq:u}, for any $t\leq t_0$ we rewrite \eqref{pf-eq:R} as
\begin{equation}\label{pf-eq:re-R}
	e^t=\frac{N\left(\frac{N^2}{N-1}\right)^{N-1}\lambda^N}{\left[1+\lambda^{\frac{N}{N-1}}R^{\frac{N}{N-1}}(t)\right]^N}
\end{equation}
where $$\lambda:=\left[\frac{e^{t_0}}{N\left(\frac{N^2}{N-1}\right)^{N-1}}\right]^{\frac1N}.$$
Letting $t=t(x)$ in \eqref{pf-eq:re-R}, we deduce that
$$	e^{u(x)}=\frac{N\left(\frac{N^2}{N-1}\right)^{N-1}\lambda^N}{\left[1+\lambda^{\frac{N}{N-1}}\hat{H}_0^{\frac{N}{N-1}}(x-x_0)\right]^N}$$
for any $x\in\mathbb{R}^N$.
The proof is thus complete.
\end{proof}

\section{Singularity of solutions to quasi-linear equations of $N$-Laplace type}\label{sec:sing}

The main goal of this section is to establish a result concerning singularity estimates of nonnegative solutions at infinity for a class of $N$-Laplace type equations defined in an exterior domain; see Theorem \ref{sing-thm:log-A-f}. In particular, we derive precise logarithmic asymptotics at infinity or at the origin for nonnegative solutions of the anisotropic $N$-Laplace equation 
\begin{equation}\label{sing-eq:Homo-laplace}
	\Delta_N^H\,v=0
\end{equation} 
in $\mathbb{R}^N\setminus\overline{B_1}$ or in $B_1\setminus\{0\}$, referring to Theorem \ref{sing-thm:lim-log} and Remark \ref{sing-rk:lim-log-O}. In addition, we present a rigidity result for equation \eqref{sing-eq:Homo-laplace} in $\mathbb{R}^N\setminus\{0\}$; see Proposition \ref{sing-prop:rigid}. These are inspired by those obtained in Serrin \cite{Serrin1964,Serrin1965-1,Serrin1965} and in Kichenassamy--Veron \cite{KV1986}.

First, let us consider the following quasi-linear equation 
\begin{equation}\label{sing-eq:div-A-f}
	\mathrm{div}\,\mathcal{A}(\nabla u)=f(x)\quad\text{in }\mathbb{R}^N\setminus\overline{B_1}\,.
\end{equation} 
Here, $\mathcal{A}$ is a vector function on $\mathbb{R}^N$ such that
\begin{equation}\label{sing-eq:condition-A}
	|\mathcal{A}(p)|\le a\,|p|^{N-1}\quad\text{and}\quad\left<\mathcal{A}(p),p\right>\ge b\,|p|^N\qquad\forall p\in\mathbb{R}^N,
\end{equation}
for some constants $a>0$ and $b>0$; $f\in L^1(\mathbb{R}^N\setminus\overline{B_1})$ is a scalar function having the decay estimate
\begin{equation}\label{sing-eq:decay-f}
	0\le f(x)\le\frac{c}{|x|^N} \quad\text{a.e. }x\in\mathbb{R}^N\setminus\overline{B_1},
\end{equation}
for some constant $c>0$. According to \cite{Serrin1964,DiBen1983,Tolksdorf1984}, a solution $u\in W_{\mathrm{loc}}^{1,N}(\mathbb{R}^N\setminus\overline{B_1})$ of \eqref{sing-eq:div-A-f} is of class $C_{\mathrm{loc}}^{1,\alpha}$. We shall push Serrin's singularity estimates \cite{Serrin1965} for \eqref{sing-eq:div-A-f} with $f\equiv0$ into the inhomogeneous setting \eqref{sing-eq:decay-f}. Precisely, we have


\begin{theorem}\label{sing-thm:log-A-f}
	Let $u$ be a nonnegative solution of \eqref{sing-eq:div-A-f}, and assume that \eqref{sing-eq:condition-A} and \eqref{sing-eq:decay-f} hold. If $u(x)\to+\infty$ as $|x|\to+\infty$, then there exists a constant $d>1$ such that  
	\begin{equation}\label{sing-eq:log-sing}
		\frac{1}{d}\log|x|\le u(x)\le d\log|x|
	\end{equation} 
holds in the neighborhood of infinity.
\end{theorem}

To prove this, we present two lemmas. In the sequel, we denote by $\mathscr{Theta}$ the set of Lipschitz continuous functions on $\mathbb{R}^N\setminus B_1$ which are identically one in a neighborhood of infinity but vanish in a neighborhood of $\partial B_1$.

\begin{lemma}\label{sing-lem:A-f-K}
	Let $u$ be as in Theorem \ref{sing-thm:log-A-f}. There is a constant $K>0$ such that
	$$\int_{\mathbb{R}^N\setminus B_1}\left<\mathcal{A}(\nabla u),\nabla\Theta\right>dx+\int_{\mathbb{R}^N\setminus B_1} f\Theta\,dx=K$$
	for any $\Theta\in\mathscr{Theta}$.
\end{lemma}

\begin{proof}
	For any $\Theta_1,\Theta_2\in\mathscr{Theta}$, we can take $\Theta_1-\Theta_2$ as a test function in the weak formulation of \eqref{sing-eq:div-A-f} and obtain accordingly that
	\begin{align*}
		&\int_{\mathbb{R}^N\setminus B_1}\left<\mathcal{A}(\nabla u),\nabla\Theta_1\right>dx+\int_{\mathbb{R}^N\setminus B_1} f\Theta_1\,dx\\
		=&\int_{\mathbb{R}^N\setminus B_1}\left<\mathcal{A}(\nabla u),\nabla\Theta_2\right>dx+\int_{\mathbb{R}^N\setminus B_1} f\Theta_2\,dx,
	\end{align*}
	which implies the existence of $K$.
	
	In order to show $K>0$, let us assume $u(x)<0$ on $|x|=2$ (otherwise we consider $u-k$ for some constant $k$). For $|x|>2$, let
	\begin{equation*}
		\hat{u}(x)=
			\begin{cases}
			0&\text{if }u(x)\le 0\\
			u&\text{if }0<u(x)<1\\
			1&\text{if }u(x)\ge1,
		\end{cases}
	\end{equation*}
and set $\hat{u}(x)\equiv 0$ for $|x|\le2$. Since $u(x)\to+\infty$ as $|x|\to+\infty$, $\hat{u}\in\mathscr{Theta}$. Thus, it follows from \eqref{sing-eq:condition-A} that
\begin{align*}
	K&=\int_{\mathbb{R}^N\setminus B_1}\left<\mathcal{A}(\nabla u),\nabla\hat{u}\right>dx+\int_{\mathbb{R}^N\setminus B_1} f\hat{u}\,dx\\
	&\ge\int_{\mathbb{R}^N\setminus B_1}\left<\mathcal{A}(\nabla\hat{u}),\nabla\hat{u}\right>dx\\
	&\ge b\int_{\mathbb{R}^N\setminus B_1} |\nabla\hat{u}|^N\,dx.
\end{align*}
 Hence, one has $K>0$.
\end{proof}

\begin{remark}
	If $u\in C^1(\mathbb{R}^N\setminus B_1)$, then $K$ can be interpreted as 
   \begin{equation*}
	K=\int_{\partial B_1}\left<\mathcal{A}(\nabla u),\nu\right>d\mathcal{H}^{N-1}+\int_{\mathbb{R}^N\setminus B_1} f\,dx.
	\end{equation*}
This can be validated by taking a sequence $\Theta_j\in\mathscr{Theta}$ such that $\Theta_j\to\chi_{\mathbb{R}^N\setminus B_1}$ in the $L^1$ sense and $\nabla\Theta_j\to\nu\mathcal{H}^{N-1}\llcorner\partial B_1$ in the sense of measures, where $\nu$ is the outer normal to $B_1$. 

In particular, assume that $u=\sup_{\mathbb{R}^N}U-U$ where $U$ is a solution of Liouville equation \eqref{in-eq:Lio}. As done in \eqref{asym-eq:Lio-dual-u}, $u$ is a nonnegative solution of 
\begin{equation*}
	\Delta_N^{\hat{H}} \,u=e^U \quad\text{in }\mathbb{R}^N,
\end{equation*}
which corresponds to \eqref{sing-eq:div-A-f} with $$\mathcal{A}(p)=\hat{H}^{N-1}(p)\nabla\hat{H}(p)\quad\text{and}\quad f(x)=e^{U(x)}\qquad\forall\,x,p\in\mathbb{R}^N.$$
In this case, we obtain by the divergence theorem that
$$K=\int_{\mathbb{R}^N}e^U\,dx.$$
\end{remark}

\begin{lemma}\label{sing-lem:Harnack}
	Let $u$ be as in Theorem \ref{sing-thm:log-A-f}. Then there exist constants $C>0$ and $\sigma_1>0$ such that
	\begin{equation}\label{sing-eq:Harnack-u}
		\max_{|x|=\sigma}u\le C\min_{|x|=\sigma}u
	\end{equation}
for any $\sigma\ge\sigma_1$.
\end{lemma}

\begin{proof}
	Let $u_\sigma(x)=u(\sigma x)$ for $\sigma>2$. It follows from \eqref{sing-eq:div-A-f} that
	\begin{equation}\label{sing-eq:div-A-sigma}
		\mathrm{div}\,\mathcal{A}_\sigma(\nabla u_\sigma)=f_\sigma(x)\quad\text{in }\mathbb{R}^N\setminus\overline{B_\frac12}\,,
	\end{equation}
where $f_\sigma(x)=\sigma^Nf(\sigma x)$ and
\begin{equation*}
	\mathcal{A}_\sigma(p)=\sigma^{N-1}\mathcal{A}(\frac{p}{\sigma})\quad\text{for }p\in\mathbb{R}^N.
\end{equation*}
Given $0<\epsilon<N$ and $y\in\mathbb{S}^{N-1}$, by the Harnack inequality in \cite[Theorem 6]{Serrin1964} applied to the case of equation \eqref{sing-eq:div-A-sigma} we can obtain that
$$\max_{B_{\frac14}(y)}u_\sigma\le C\min_{B_{\frac14}(y)}u_\sigma+\|f_\sigma\|_{L^{\frac{N}{N-\epsilon}}(B_2\setminus B_{\frac12})}^{\frac{1}{N-1}}\,,$$
for some constant $C=C(N,a,\epsilon)>0$ where $a$ is as in \eqref{sing-eq:condition-A}. In view of \eqref{sing-eq:decay-f}, we easily see $$\|f_\sigma\|_{L^{\frac{N}{N-\epsilon}}(B_2\setminus B_{\frac12})}\le C=C(N).$$
Thus, we conclude that
\begin{equation*}
	\max_{|x|=1}u_\sigma\le C_1\min_{|x|=1}u_\sigma+C_2
\end{equation*}
for some positive constants $C_1$ and $C_2$ that are independent of $\sigma$. Moreover, since $u(x)\to+\infty$ as $|x|\to+\infty$, there is $\sigma_1$ such that $\min_{|x|=\sigma}u\ge C_2$ for any $\sigma\ge\sigma_1$. This proves \eqref{sing-eq:Harnack-u}.
\end{proof}

\begin{proof}[Proof of Theorem \ref{sing-thm:log-A-f}] By subtracting a suitable constant, we assume without loss of generality that $u(x)<0$ on $|x|=2$. For $\sigma>2$, let $m(\sigma)=\min_{|x|=\sigma}u$. Since $u(x)\to+\infty$ as $|x|\to+\infty$, there is $\sigma_0>2$ such that $m(\sigma)>0$ for any $\sigma\ge\sigma_0$. We shall first prove for each $\sigma\ge\sigma_0$, 
\begin{equation}\label{sing-eq:min-log}
	m(\sigma)\le C\log\frac{\sigma}{2}
\end{equation}
for some constant $C>0$ independent of $\sigma$.

Given $\sigma\ge\sigma_0$, let
\begin{equation*}
	\bar{u}(x)=
	\begin{cases}
		0&\text{if }u(x)\le 0\\
		u(x)&\text{if }0<u(x)<m(\sigma)\\
		m(\sigma)&\text{if }u(x)\ge m(\sigma)
	\end{cases}
\end{equation*}
for $2\le|x|\le\sigma$, and let $\bar{u}(x)\equiv 0$ for $|x|\le2$ and $\bar{u}(x)\equiv m(\sigma)$ for $|x|\ge\sigma$. Clearly, $\frac{\bar{u}}{m(\sigma)}\in\mathscr{Theta}$. Thus, by Lemma \ref{sing-lem:A-f-K} and \eqref{sing-eq:condition-A}, we have
	\begin{align}
		m(\sigma)K&=\int_{\mathbb{R}^N\setminus B_1}\left<\mathcal{A}(\nabla u),\nabla\bar{u}\right>dx+\int_{\mathbb{R}^N\setminus B_1} f\bar{u}\,dx\notag\\
		&\ge b\int_{\mathbb{R}^N} |\nabla\bar{u}|^N\,dx\,. \label{sing-eq:lower-m}
	\end{align}
We claim 
\begin{equation}\label{sing-eq:capacity}
	\int_{\mathbb{R}^N} |\nabla\bar{u}|^N\,dx\ge \mathcal{H}^{N-1}(\mathbb{S}^{N-1})(m(\sigma))^N\left(\log\frac{\sigma}{2}\right)^{1-N}.
\end{equation}
Indeed, let us consider minimizing the functional
\begin{equation*}
	\int_{\mathbb{R}^N} |\nabla \psi|^N\,dx
\end{equation*}
over all Lipschitz continuous functions $\psi(x)$ that vanish for $|x|\le 2$ and equal identically $m(\sigma)$ for $|x|\ge\sigma$. Observe that this variational problem admits a unique minimizer $\psi_0$ satisfying
\begin{equation*}
	\begin{cases}
		\Delta_N\psi_0=0&\text{in }B_\sigma\setminus\overline{B_2}\\
		\psi_0=0&\text{on }\partial B_2\\
		\psi_0=m(\sigma)&\text{on }\partial B_\sigma.
	\end{cases}
\end{equation*}
Actually, it is seen that $$\psi_0(x)=m(\sigma)\frac{\log|x|-\log2}{\log\sigma-\log2}\quad\text{for }2\le|x|\le\sigma.$$
Hence, one deduces that
$$\int_{\mathbb{R}^N} |\nabla\bar{u}|^N\,dx\ge\int_{\mathbb{R}^N} |\nabla\psi_0|^N\,dx=\mathcal{H}^{N-1}(\mathbb{S}^{N-1})(m(\sigma))^N\left(\log\frac{\sigma}{2}\right)^{1-N},$$
which verifies the claim. Now, combining  \eqref{sing-eq:lower-m} and \eqref{sing-eq:capacity} gives \eqref{sing-eq:min-log}.

Next, let $\sigma_0$ be sufficiently large such that $\int_{\mathbb{R}^N\setminus B_\sigma}f\le\frac{K}{2}$ for any $\sigma\ge\sigma_0$, where $K>0$ is given in Lemma \ref{sing-lem:A-f-K}. Let $M(\sigma)=\max_{|x|=\sigma}u$. We shall show there is a constant $\tilde{C}=\tilde{C}(N,K,a,b)$ such that
\begin{equation}\label{sing-eq:max-log}
	M(\sigma)\ge \tilde{C}\log\frac{\sigma}{\sigma_0}
\end{equation}
for any $\sigma>\sigma_0$.

Given $\sigma>\sigma_0$, set
\begin{equation*}
	\theta=\theta(x,\sigma)=
	\begin{cases}
		0&\text{if }|x|\le\sigma_0\\
		\frac{\log|x|-\log\sigma_0}{\log\sigma-\log\sigma_0}&\text{if }\sigma_0<|x|<\sigma\\
		1&\text{if }|x|\ge\sigma.
	\end{cases}
\end{equation*}
It follows from Lemma \ref{sing-lem:A-f-K} and H\"older inequality that
\begin{align*}
	K&=\int_{\mathbb{R}^N\setminus B_1}\left<\mathcal{A}(\nabla u),\nabla\theta\right>dx+\int_{\mathbb{R}^N\setminus B_1} f\theta\,dx\\
	&\le\left(\int_{B_\sigma\setminus B_{\sigma_0}}|\nabla\theta|^N\,dx\right)^{\frac{1}{N}}\left(\int_{B_\sigma\setminus B_{\sigma_0}}|\mathcal{A}(\nabla u)|^{\frac{N}{N-1}}\,dx\right)^{\frac{N-1}{N}}+\frac{K}{2}.
\end{align*}
	This entails
	\begin{equation}\label{sing-eq:K-A}
		\bar{C}K^{\frac{N}{N-1}}\log\frac{\sigma}{\sigma_0}\le\int_{B_\sigma\setminus B_{\sigma_0}}|\mathcal{A}(\nabla u)|^{\frac{N}{N-1}}\,dx
	\end{equation}
for some constant $\bar{C}=\bar{C}(N)>0$. Define
\begin{equation*}
	V=V(x,\sigma)=
	\begin{cases}
		0&\text{if }|x|<2\\
		\max\{0,u(x)\}&\text{if }2\le|x|\le\sigma\\
		\min\{M(\sigma),u(x)\}&\text{if }|x|>\sigma.
	\end{cases}
\end{equation*}
 Notice that $V=u$ on $B_\sigma\setminus B_{\sigma_0}$ and $V\equiv M(\sigma)$ near infinity (which implies $\frac{V}{M(\sigma)}\in\mathscr{Theta}$). Thus, by Lemma \ref{sing-lem:A-f-K} and \eqref{sing-eq:condition-A} we have
 \begin{align*}
 	KM(\sigma)&=\int_{\mathbb{R}^N\setminus B_1}\left<\mathcal{A}(\nabla u),\nabla V\right>dx+\int_{\mathbb{R}^N\setminus B_1} fV\,dx\\
 	&\ge\int_{\{V=u\}}\left<\mathcal{A}(\nabla u),\nabla u\right>dx\\
 	&\ge b\int_{\{V=u\}}|\nabla u|^N\,dx\\
 	&\ge b\int_{B_\sigma\setminus B_{\sigma_0}}|\nabla u|^N\,dx\\
 	&\ge\frac{b}{a}\int_{B_\sigma\setminus B_{\sigma_0}}|\mathcal{A}(\nabla u)|^{\frac{N}{N-1}}\,dx.
 \end{align*}
This together with \eqref{sing-eq:K-A} yields that $$M(\sigma)\ge\frac{b}{a}\bar{C}K^{\frac{1}{N-1}}\log\frac{\sigma}{\sigma_0},$$
thus proving \eqref{sing-eq:max-log}.

Now, with \eqref{sing-eq:min-log} and \eqref{sing-eq:max-log} in hand, we deduce by Lemma \ref{sing-lem:Harnack} that, for any $\sigma\ge\sigma_0$,
$$C_1\log\frac{\sigma}{\sigma_0}\le m(\sigma)\le M(\sigma)\le C_2\log\frac{\sigma}{2}$$
where $C_1$ and $C_2$ are some positive constants independent of $\sigma$. This shows that \eqref{sing-eq:log-sing} holds, which finishes the proof.
\end{proof}

For the anisotropic $N$-Laplace equation \eqref{sing-eq:Homo-laplace}, the estimates provided in \eqref{sing-eq:log-sing} can be improved to be a sharp version, as stated in the following.

\begin{theorem}\label{sing-thm:lim-log}
	Let $v$ be a nonnegative solution of \begin{equation}\label{sing-eq:Homo}
		\Delta_N^H\,v=0\quad\text{in }\mathbb{R}^N\setminus\overline{B_1}.
	\end{equation}
	Then there exist constants $\gamma\ge0$ and $\beta\ge0$ such that
	\begin{subequations}\label{sing-eq:lim-log}
		\begin{align}
			\lim_{|x|\to+\infty}\left|v(x)-\gamma\log H_0(x)\right|&=\beta\label{sing-eq:lim-log-a}\\
			\lim_{|x|\to+\infty} |x|\left|\nabla(v-\gamma\log H_0(x))\right|&=0.\label{sing-eq:lim-log-b}
		\end{align}
	\end{subequations}
\end{theorem}

In order to prove Theorem \ref{sing-thm:lim-log}, we need the following rigidity result.
\begin{lemma}\label{sing-lem:rigid}
	Assume that $G(x)\in C^1(\mathbb{R}^N\setminus\{0\})\cap L^\infty(\mathbb{R}^N)$ satisfies
	\begin{equation*}
		\Delta_N^H\,(\gamma\log H_0(x)+G(x))=0\quad\text{or }\quad \Delta_N^H\,(-\gamma\log\hat{H}_0(x)+G(x))=0
	\end{equation*}
in $\mathbb{R}^N\setminus\{0\}$, for some constant $\gamma\ge0$. Then $G(x)$ is a constant function.
\end{lemma}

\begin{proof}
	In the Euclidean case ($H=|\cdot|$), this was proved in \cite[Lemma 4.3]{Esposito2018}. By adapting the argument there to the anisotropic case, we showed Lemma \ref{sing-lem:rigid} in our previous paper \cite{CL2022}. More precisely, one may consult \cite[Lemma 3.2]{CL2022} with $\Sigma$ therein taken particularly as $\mathbb{R}^N$ and with $\gamma$ therein corrected to be a nonnegative constant.
\end{proof}

\begin{proof}[Proof of Theorem \ref{sing-thm:lim-log}]
	
	Let us first consider the case that $v$ is bounded on $\mathbb{R}^N\setminus B_2$, that is, there is $C>0$ s.t. $v(x)\le C$ for any $|x|\ge2$. We claim that \eqref{sing-eq:lim-log} holds with $\gamma=0$ in this case.
	
	Indeed, for $R>1$, we let $v_R(x)=v(Rx)$. Notice that $\Delta_N^H\,v_R=0$ and that $v_R$ is uniformly bounded in each compact set of $\mathbb{R}^N\setminus\{0\}$. By the regularity results in \cite{DiBen1983,Tolksdorf1984}, $v_R$ is also uniformly bounded in $C^{1,\alpha}_{\mathrm{loc}}(\mathbb{R}^N\setminus\{0\})$. Thus, it follows from Ascoli--Arzel\`a theorem and a diagonal process that $v_{R_j}\to\tilde{v}$ in $C_{\mathrm{loc}}^1(\mathbb{R}^N\setminus\{0\})$ for a subsequence $R_j\to+\infty$. Notice that $\tilde{v}\in C^1(\mathbb{R}^N\setminus\{0\})\cap L^\infty(\mathbb{R}^N)$ satisfies $\Delta_N^H\tilde{v}=0$ in $\mathbb{R}^N\setminus\{0\}$. Hence, applying Lemma \ref{sing-lem:rigid} with $\gamma=0$ to $\tilde{v}$ yields $\tilde v\equiv\beta$ for some constant $\beta\ge 0$. This implies $\sup_{|x|=R_j} |v(x)-\beta|\to 0$ as $R_j\to+\infty$ and then the maximum principle implies that it holds for the whole sequence $|x|\to+\infty$, thus showing \eqref{sing-eq:lim-log-a} with $\gamma=0$. For \eqref{sing-eq:lim-log-b}, it follows by noting that $\nabla v_R\to 0$ uniformly on $\mathbb{S}^{N-1}$ for any sequence $R\to+\infty$ up to extracting a subsequence.

	Next, let us turn to the case that $v$ is unbounded, i.e., there is a sequence of points $x_j\to\infty$ such that $v(x_j)\to +\infty$. By Harnack inequality \cite[Theorem 6]{Serrin1964} applied to $v$, one has $$\max_{|x|=|x_j|}v\le C\min_{|x|=|x_j|}v\,$$
	for some constant $C=C(N)>0$, from which $\min_{|x|=|x_j|} v\to+\infty$ follows. Then, by using the maximum principle for $v$ on the annular domains $B_{|x_{j+1}|}\setminus B_{|x_j|}$, we get $v(x)\to+\infty$ as $|x|\to+\infty$. Thus, by Theorem \ref{sing-thm:log-A-f} we see that \eqref{sing-eq:log-sing} holds for $v$.
	
	 Let $V_R(x)=\frac{v(Rx)}{\log R}$ for $R>1$. It is seen $\Delta_N^H\,V_R=0$ and $V_R$ is uniformly bounded in $L_{\mathrm{loc}}^\infty(\mathbb{R}^N\setminus\{0\})$ by \eqref{sing-eq:log-sing}. Thus, as in the bounded case, an approximation argument based on \cite{DiBen1983,Tolksdorf1984} and then an application of Lemma \ref{sing-lem:rigid} can lead us to obtain that, along a subsequence $R_j\to+\infty$, $V_{R_j}\to\gamma$ in $C_{\mathrm{loc}}^1(\mathbb{R}^N\setminus\{0\})$ for some constant $\gamma>0$. This implies that $\sup_{\partial B^{H_0}_{R_j}}|\frac{u(x)}{\log R_j}-\gamma|\to0$. Fix $R_0$ such that $\overline{B_1}\subset B^{H_0}_{R_0}$. Then for $0<\epsilon\ll1$, we set
	 	\begin{align*}
	 		\bar{v}_\epsilon(x)&=(\gamma+\epsilon)\log H_0(x)-(\gamma+\epsilon)\log R_0+\sup_{\partial B^{H_0}_{R_0}}v,\\
	 		\underline{v}_\epsilon(x)&=(\gamma-\epsilon)\log H_0(x)-(\gamma-\epsilon)\log R_0+\inf_{\partial B^{H_0}_{R_0}}v.
	 	\end{align*}
Observe that $\Delta_N^H\,\underline{v}_\epsilon=\Delta_N^H\,\bar{v}_\epsilon=0$ and that $\underline{v}_\epsilon\le v\le\bar{v}_\epsilon$ on $\partial B^{H_0}_{R_0}\cup\partial B^{H_0}_{R_j}$ for all sufficiently large $R_j$. By the comparison principle, we infer that $$\underline{v}_\epsilon\le v\le\bar{v}_\epsilon \quad\text{in }\mathbb{R}^N\setminus B^{H_0}_{R_0}.$$ Thus, by letting $\epsilon\to0$ we obtain 
\begin{equation}\label{sing-eq:bdd-v}
	v(x)-\gamma\log H_0(x)\in L^\infty(\mathbb{R}^N\setminus B^{H_0}_{R_0}).
\end{equation}

Now, for $m>1$, we introduce the function 
\begin{equation}\label{sing-eq:scaling-v}
	v_m(y):=v(my)-\gamma\log m.
\end{equation}
By setting $G(x):=v(x)-\gamma\log H_0(x)$, we see $$v_m(y)=\gamma\log H_0(y)+G(my).$$ Since $G(x)\in L^\infty(\mathbb{R}^N\setminus B_{R_0}^{H_0})$ by \eqref{sing-eq:bdd-v}, $v_m(y)$ is bounded in every compact subset of $\mathbb{R}^N\setminus\{0\}$, uniformly in $m$. As argued above, according to \cite{DiBen1983,Tolksdorf1984} one has that $v_m$ is uniformly bounded in $C^{1,\alpha}_{\mathrm{loc}}(\mathbb{R}^N\setminus\{0\})$, and thus, up to a subsequence, one obtains $v_{m}\to v_\infty$ in $C_{\mathrm{loc}}^1(\mathbb{R}^N\setminus\{0\})$ for some function $v_\infty$ satisfying $\Delta_N^H\,v_\infty=0$ in $\mathbb{R}^N\setminus\{0\}$. If we set $$G_\infty(y):=v_\infty(y)-\gamma\log H_0(y),$$ then $G_\infty(y)\in L^\infty(\mathbb{R}^N)$. Hence, by Lemma \ref{sing-lem:rigid} we find $G_\infty\equiv\beta$ for some constant $\beta\in\mathbb{R}$. This implies that, up to a subsequence, $$\lim_{m\to\infty}\left(v(my)-\gamma\log H_0(my)\right)=\beta$$ in the $C^1_{\mathrm{loc}}$ topology. Via the comparison principle, we easily get \eqref{sing-eq:lim-log-a}. Then, since $$\sup_{\partial B^{H_0}_1}|\nabla_y \left(v(my)-\gamma\log H_0(my)\right)-\nabla G_\infty|\to 0$$ for any sequence $m\to+\infty$ up to extracting a subsequence, we deduce that $$\sup_{\partial B^{H_0}_m}H_0(x)\left|\nabla_x\left(v(x)-\gamma\log H_0(x)\right)\right|\to0$$
for $m\to+\infty$, thus proving \eqref{sing-eq:lim-log-b} and completing the proof.
\end{proof}

\begin{remark}\label{sing-rk:lim-log-O}
	The reasoning lines of proof of Theorem \ref{sing-thm:lim-log} can be actually refined as follows. Thanks to Theorem \ref{sing-thm:log-A-f}, it is first seen that a nonnegative solution $v$ of \eqref{sing-eq:Homo} fulfills 
	\begin{equation}\label{sing-eq:upper-log-v}
		v(x)\le\lambda\log|x|+\mu
	\end{equation}
	on $\mathbb{R}^N\setminus B_2$, for some constants $\lambda$ and $\mu$. Then, by exploiting the scaling $\frac{v(Rx)}{\log R}$, it is deduced that
	\begin{equation}\label{sing-eq:sharp-log-v}
		v(x)-\gamma\log H_0(x)\in L^\infty(\mathbb{R}^N\setminus B_2)
	\end{equation}
for some constant $\gamma>0$. Finally, a scaling argument exploiting \eqref{sing-eq:scaling-v} leads to asymptotics \eqref{sing-eq:lim-log}.
	
	Such an approach works also in deriving asymptotic behavior at the origin for nonnegative solutions of equation \eqref{sing-eq:Homo-laplace} defined in the punctured ball $B_1\setminus\{0\}$, as discussed in Kichenassamy--Veron \cite{KV1986} for the case of $N$-harmonic functions. Indeed, we can adapt the proof of Theorem \ref{sing-thm:lim-log} to obtain that any nonnegative solution $v$ of \eqref{sing-eq:Homo-laplace} in $B_1\setminus\{0\}$ fulfills \eqref{sing-eq:upper-log-v} near the origin, according to Serrin \cite[Theorem 12]{Serrin1964}. Then by adapting the procedure arguing \eqref{sing-eq:sharp-log-v} and concluding \eqref{sing-eq:lim-log} to the punctured case, we are able to validate $$v(x)+\gamma\log\hat{H}_0(x)\in L^\infty(B_\frac12)$$
	 and further deduce that in this case, \eqref{sing-eq:lim-log-a} and \eqref{sing-eq:lim-log-b} still hold with $|x|\to+\infty$ replaced by $|x|\to0$ and with $-\gamma\log H_0$ replaced by $\gamma\log\hat{H}_0$.
\end{remark}

In terms of Remark \ref{sing-rk:lim-log-O}, we are able to derive a rigidity result in Proposition \ref{sing-prop:rigid} below, which improves Lemma \ref{sing-lem:rigid}. Notice that, in the Euclidean case, this result as well as its Corollary \ref{sing-cor:rigid} to be stated has been obtained in  \cite[Theorem 2.2]{KV1986}. Here we extend it to the anisotropic setting by adapting the idea there.

\begin{proposition}\label{sing-prop:rigid}
	Let $v$ be a solution of \begin{equation}\label{sing-eq:Homo-pun}
		\Delta_N^H\,v=0\quad\text{in }\mathbb{R}^N\setminus\{0\}
	\end{equation}
and assume that 
\begin{equation}\label{sing-eq:v-lam-mu}
	|v(x)|\le\lambda\left|\log|x|\right|+\mu\quad\text{in } \mathbb{R}^N\setminus\{0\}\,,
\end{equation}
for some constants $\lambda\ge0$ and $\mu\ge0$. Then $$v(x)=\gamma\log H_0(x)+\beta\quad\text{or}\quad v(x)=-\gamma\log\hat{H}_0(x)+\beta $$
for some constant $\gamma\ge0$ and $\beta\in\mathbb{R}$.
\end{proposition}

\begin{proof}
	Under assumption \eqref{sing-eq:v-lam-mu}, we see that the function $v_r(x):=\frac{v(rx)}{\log r}$ with $r>0$ is bounded in $L_{\mathrm{loc}}^\infty(\mathbb{R}^N\setminus\{0\})$, uniformly in $r$. From this, we can argue as in the proof of Theorem \ref{sing-thm:lim-log} to obtain first that, for some constant $\gamma_1$, $v_{r}\to\gamma_1$ in $C_{\mathrm{loc}}^1(\mathbb{R}^N\setminus\{0\})$ as $r\to+\infty$. If $\gamma_1\ge0$, as argued for \eqref{sing-eq:bdd-v}, we have
	\begin{equation}\label{sing-eq:v-gamma1}
		\|v-\gamma_1\log H_0\|_{L^\infty(\mathbb{R}^N\setminus B_1)}\le C
	\end{equation}
for some constant $C>0$; moreover, we can infer that 
\begin{equation}\label{sing-eq:lim-v-gamma-beta}
	\lim_{|x|\to+\infty}\left(v(x)-\gamma_1\log H_0(x)\right)=\beta
\end{equation}
for some constant $\beta$. If $\gamma_1<0$, by modifying the function $\log H_0$ as $\log\hat{H}_0$ in the above argument, we can see that \eqref{sing-eq:v-gamma1} and \eqref{sing-eq:lim-v-gamma-beta} hold with $H_0$ replaced by $\hat{H}_0$.

On the other hand, as explained in Remark \ref{sing-rk:lim-log-O}, one can argue in the same way to find that for some constant $\gamma_2$, $v_r\to\gamma_2$ in $C_{\mathrm{loc}}^1(\mathbb{R}^N\setminus\{0\})$ as $r\to0$, and then to obtain that
\begin{equation}\label{sing-eq:v-gamma2}
	\|v-\gamma_2\log H_0\|_{L^\infty(B_1)}\le C\quad\text{if }\gamma_2\ge 0,
\end{equation}
or that $$\|v-\gamma_2\log\hat{H}_0\|_{L^\infty(B_1)}\le C\quad\text{if }\gamma_2<0.$$
Moreover, in this case, \eqref{sing-eq:lim-log} holds for $v$ with $|x|\to+\infty$ replaced by $|x|\to0$ (and with $H_0$ replaced by $\hat{H}_0$) if $\gamma_2\ge 0$ ($\gamma_2<0$). In particular, we have, for $\gamma_2\ge 0$,
$$\nabla v(x)=\gamma_2\nabla(\log H_0(x))+o(|x|^{-1})\quad\text{as }|x|\to0$$
(for $\gamma_2<0$, this holds with $H_0$ replaced by $\hat{H}_0$), which implies $\nabla v\in L^q(B_1)$ for any $1\le q<N$. Then, given $R>0$, a standard approximation procedure based on the divergence theorem shows that
\begin{equation}\label{sing-eq:Eq-v}
	-\Delta_N^H\,v=C_{\gamma_2}\,\delta_0=
	\begin{cases}
		-\Delta_N^H\,(\gamma_2\log H_0) &\text{if }\gamma_2\ge 0\\
		-\Delta_N^H\,(\gamma_2\log\hat{H}_0) &\text{if }\gamma_2<0,
	\end{cases}
\end{equation}
 in the sense of distributions in $B_R$, where $\delta_0$ is the Dirac measure at the origin and
 \begin{equation*}
 	C_{\gamma_2}=
 	\begin{cases}
 		-\gamma_2^{N-1}N\mathcal{L}^{N}(B_1^{H_0})&\text{if }\gamma_2\ge 0\\
 		(-\gamma_2)^{N-1}N\mathcal{L}^{N}(B_1^{\hat{H}_0})&\text{if }\gamma_2<0.
 	\end{cases}
 \end{equation*}

If $\gamma_1=\gamma_2=0$, i.e., $v$ is globally bounded, we directly conclude from Lemma \ref{sing-lem:rigid} that $v\equiv\beta$ for some constant $\beta$. Next, let us assume that $\gamma_1\neq 0$ or $\gamma_2\neq 0$; we will prove $\gamma_1=\gamma_2:=\gamma$ in this situation.

Indeed, if $\gamma_1>0$, we introduce 
\begin{gather*}
	\bar{v}_{\epsilon,r}(x)=(\gamma_1+\epsilon)(\log H_0(x)-\log r)+\gamma_2\log r+C,\\
	\underline{v}_{\epsilon,r}(x)=(\gamma_1-\epsilon)(\log H_0(x)-\log r)+\gamma_2\log r-C,
\end{gather*}
for given $0<\epsilon\ll 1$ and $0<r<\mu_1$ where $\mu_1$ is as in \eqref{asym-eq:bound-H0}. Notice from \eqref{sing-eq:v-gamma1} that 
\begin{equation}\label{sing-eq:lim-v-gamma2}
	\lim_{|x|\to+\infty}\frac{v(x)-\gamma_2\log r\pm C}{\log H_0(x)-\log r}=\gamma_1.
\end{equation} 
This, together with \eqref{sing-eq:v-gamma2}, entails that, 
$$\underline{v}_{\epsilon,r}\le v\le\bar{v}_{\epsilon,r} \quad\text{on }\partial B^{H_0}_{r}\cup\partial B^{H_0}_{R}$$ for all large $R$. Thus, by the comparison principle and letting $\epsilon\to0$, we obtain
$$\gamma_1\log H_0(x)+(\gamma_2-\gamma_1)\log r-C\le v(x)\le\gamma_1\log H_0(x)+(\gamma_2-\gamma_1)\log r+C$$
for any $x$ with $H_0(x)\ge r$. Fixing $x\neq 0$ and letting $r\to0$ in this inequality implies $\gamma_1=\gamma_2$. 

Likewise, if $\gamma_1<0$, we can modify $\log H_0$ above as $\log\hat{H}_0$ to conclude $\gamma_1=\gamma_2$.

If $\gamma_2>0$, let us modify the above $\bar{v}_{\epsilon,r}$ and $ \underline{v}_{\epsilon,r}$ to
\begin{gather*}
	\bar{v}_{\epsilon,r}(x)=(\gamma_2+\epsilon)(\log H_0(x)-\log r)+\gamma_1\log r+C,\\
	\underline{v}_{\epsilon,r}(x)=(\gamma_2-\epsilon)(\log H_0(x)-\log r)+\gamma_1\log r-C,
\end{gather*}
for given $0<\epsilon\ll 1$ and $r>\mu_2$ where $\mu_2$ is as in \eqref{asym-eq:bound-H0}. By \eqref{sing-eq:v-gamma2}, we see that \eqref{sing-eq:lim-v-gamma2} holds with $|x|\to+\infty$ replaced by $|x|\to0$ and with an exchange of $\gamma_1$ and $\gamma_2$, which, together with \eqref{sing-eq:v-gamma1}, implies
$$\underline{v}_{\epsilon,r}\le v\le\bar{v}_{\epsilon,r} \quad\text{on }\partial B^{H_0}_{r}\text{ and as }|x|\to0.$$ 
Thus, similarly as in the case $\gamma_1>0$, the comparison principle and a procedure of letting $\epsilon\to0$ and then $r\to+\infty$ forces $\gamma_1=\gamma_2$. This is also true for $\gamma_2<0$ after a modification of $H_0$ to $\hat{H}_0$. 

Now, for $0<\varepsilon\ll 1$, by \eqref{sing-eq:lim-v-gamma-beta} with $\gamma_1=\gamma>0$ there is $T=T(\varepsilon)$ such that $$\gamma\log H_0(x)+\beta-\varepsilon\le v(x)\le\gamma\log H_0(x)+\beta+\varepsilon\quad\text{for }|x|>T.$$
By virtue of \eqref{sing-eq:Eq-v}, we can deduce from the comparison principle that $$\gamma\log H_0(x)+\beta-\varepsilon\le v(x)\le\gamma\log H_0(x)+\beta+\varepsilon\quad\text{in } B_R$$
for any $R>T$. Letting $\varepsilon\to0$ yields $$v=\gamma\log H_0(x)+\beta\quad\text{in }\mathbb{R}^N\setminus\{0\}.$$
Due to the same reason, we can obtain 
$$v=\gamma\log\hat{H}_0(x)+\beta\quad\text{in }\mathbb{R}^N\setminus\{0\}$$
when $\gamma<0$.

The proof is completed.
\end{proof}

\begin{corollary}\label{sing-cor:rigid}
	Any nonnegative solution of \eqref{sing-eq:Homo-pun} is a constant.
\end{corollary}

\begin{proof}
	According to \cite[Theorem 12]{Serrin1964} and Theorem \ref{sing-thm:log-A-f}, a nonnegative solution of \eqref{sing-eq:Homo-pun} fulfills \eqref{sing-eq:v-lam-mu}. Thus, the assertion follows from Proposition \ref{sing-prop:rigid}.
\end{proof}

 To conclude, we remark that it is possible to generalize the logarithmic estimates in Theorem \ref{sing-thm:log-A-f} to a more general class of equations of $N$-Laplace type as considered in Serrin \cite{Serrin1964,Serrin1965-1} where singularity analysis at a finite point was performed for a very wide class of equations. More in general, we believe that the results in this section can be extended to a setting of $p$-Laplace type with $p\neq N$. These are in progress and will be presented in another paper.

\medskip

\subsection*{Acknowledgements}
The authors have been supported by the Research Project of the Italian Ministry of University and Research (MUR) Prin 2022 ``Partial differential equations and related geometric-functional inequalities'', grant number 20229M52AS\_004. The first author has been also partially supported by the ``Gruppo Nazionale per l'Analisi Matematica, la Probabilit\`a e le loro Applicazioni'' (GNAMPA) of the ``Istituto Nazionale di Alta Matematica'' (INdAM, Italy).


\begin{thebibliography}{1}
	\bibitem{AP1997}
	J.A. Aguilar Crespo, I. Peral Alonso, Blow-up behavior for solutions of $-\Delta_N u=V(x)e^u$ in bounded domains in $\mathbb{R}^N$, Nonlinear Anal. 29 (1997), 365--384. 
	\bibitem{ACF2021}
	C.A. Antonini, G. Ciraolo, A. Farina, Interior regularity results for inhomogeneous anisotropic quasilinear equations, Math. Ann. 387, 1745-1776 (2023).  
	\bibitem{BFK2003}
	M. Belloni, V. Ferone, B. Kawohl, Isoperimetric inequalities, Wulff shape and related questions for strongly nonlinear elliptic operators, Special issue dedicated to Lawrence E. Payne, Z. Angew. Math. Phys. 54 (2003), 771--783.
	\bibitem{BC2018}
	C. Bianchini, G. Ciraolo, Wulff shape characterizations in overdetermined anisotropic elliptic problems, Comm. Partial Differential Equations 43 (2018), 790--820.
	\bibitem{BNST2008}
	B. Brandolini, C. Nitsch, P. Salani, C. Trombetti, Serrin type overdetermined problems: an alternative proof, Arch. Ration. Mech. Anal. 190 (2008), 267--280. 
	\bibitem{BM1991}
	H. Br\'ezis, F. Merle, Uniform estimates and blow-up behavior for solutions of $-\Delta u=V(x)e^u$ in two dimensions, Commun. Partial Differ. Equ. 16 (1991), 1223--1253.
	\bibitem{CRS2016}
	X. Cabr\'e, X. Ros-Oton, J. Serra, Sharp isoperimetric inequalities via the ABP method, J. Eur. Math. Soc. 18 (2016), 2971--2998. 
	\bibitem{CGS1989}
	L. Caffarelli, B. Gidas, J. Spruck, Asymptotic symmetry and local behavior of semilinear elliptic equations with critical Sobolev growth, Comm. Pure Appl. Math. 42 (1989), 271--297.
	\bibitem{CK1994}
	S. Chanillo, M. Kiessling, Rotational symmetry of solutions of some nonlinear problems in statistical mechanics and geometry, Comm. Math. Phys. 160 (1994), 217--238.
	
	\bibitem{CK1995}
	S. Chanillo, M. Kiessling, Conformally invariant systems of nonlinear PDE of Liouville type, Geom. Funct. Anal. 5 (1995), 924--947.
	\bibitem{CL1991}
	W. Chen, C. Li, Classification of solutions of some nonlinear elliptic equations, Duke Math. J. 63 (1991), 615--623.
	\bibitem{CL1993}
	W. Chen, C. Li, Qualitative properties of solutions to some nonlinear elliptic equations in $\mathbb{R}^2$, Duke Math. J. 71 (1993), 427--439.
	\bibitem{CW1994}
	K.S. Chou, T.Y.H. Wan, Asymptotic radial symmetry for solutions of $\Delta u+e^u=0$ in a punctured disc, Pac. J. Math. 163 (1994), 269--276.
	\bibitem{CS2009}
	A. Cianchi, P. Salani, Overdetermined anisotropic elliptic problems, Math. Ann. 345 (2009), 859--881.
	\bibitem{CS2023}
	A. Cianchi, P. Salani, Wulff shape symmetry of solutions to overdetermined problems for Finsler Monge--Amp\'ere equations, J. Funct. Anal. 285 (2023), Paper No. 110091, 21 pp.
	\bibitem{CFR2020}
	G. Ciraolo, A. Figalli, A. Roncoroni, Symmetry results for critical anisotropic $p$-Laplacian equations in convex cones, Geom. Funct. Anal. 30 (2020), 770--803.
	\bibitem{CL2022}
	G. Ciraolo, X. Li, An exterior overdetermined problem for Finsler $N$-Laplacian in convex cones, Calc. Var. Partial Differential Equations 61 (2022), Paper No. 121, 27 pp. 
	\bibitem{CFV2016}
	M. Cozzi, A. Farina, E. Valdinoci, Monotonicity formulae and classification results for singular, degenerate, anisotropic PDEs, Adv. Math. 293 (2016), 343--381.
	
	\bibitem{DP1992}
	B. Dacorogna, C.E. Pfister, Wulff theorem and best constant in Sobolev inequality, J. Math. Pures Appl. 71 (1992), 97--118.
	\bibitem{DFSV2009}
	L. Damascelli, A. Farina, B. Sciunzi, E. Valdinoci, Liouville results for $m$-Laplace equations of Lane-Emden-Fowler type, Ann. Inst. H. Poincar\'e Anal. Non Lin\'eaire 26 (2009), 1099--1119.
	
	\bibitem{DMMS2014}
	L. Damascelli, S. Merch\'an, L. Montoro, B. Sciunzi, Radial symmetry and applications for a problem involving the $-\Delta_p(\cdot)$ operator and critical nonlinearity in $R^n$, Adv. Math. 265 (2014), 313--335.
	\bibitem{DiBen1983}
	E. DiBenedetto, $C^{1+\alpha}$ local regularity of weak solutions of degenerate elliptic equations, Nonlinear Anal. 7 (1983), 827--850.
	\bibitem{DPV2021}
	S. Dipierro, G. Poggesi, E. Valdinoci, Radial symmetry of solutions to anisotropic and weighted diffusion equations with discontinuous nonlinearities, Calc. Var. Partial Differential Equations 61 (2022), Paper No. 72, 31 pp.
	\bibitem{Esposito2018}
	P. Esposito, A classification result for the quasi-linear Liouville equation, Ann. Inst. H. Poincar\'e Anal. Non Lin\'eaire 35 (2018), 781--801.
	\bibitem{Esposito2021}
	P. Esposito, Isolated singularities for the $n$-Liouville equation, Calc. Var. Partial Differential Equations 60 (2021), Paper No. 137, 17 pp. 
	\bibitem{EM2015}
	P. Esposito, F. Morlando, On a quasilinear mean field equation with an exponential nonlinearity, J. Math. Pures Appl. 104 (2015), 354--382.
	
	\bibitem{ERSV2023}
	F. Esposito, G. Riey, B. Sciunzi, D. Vuono, Anisotropic Kelvin transform, Differential Integral Equations 36(2023), 715--726.
		
	\bibitem{FMP2010}
	A. Figalli, F. Maggi, A. Pratelli, A mass transportation approach to quantitative isoperimetric inequalities, Invent. Math. 182 (2010), 167--211. 
	\bibitem{FM1991}
	L. Fonseca, S. M\"uller, A uniqueness proof of Wulff Theorem, Proc. Royal Soc. Edinburgh 119 (1991), 125--136.
	\bibitem{IT2012}
	Y.Sh. Il'yasov, P. Tak\'a\v o, Optimal $W_{loc}^{2,2}$-regularity, Pohozhaev's identity, and nonexistence of weak solutions to some quasilinear elliptic equations, J. Differential Equations 252 (2012), 2792--2822.
	\bibitem{KP1994}
	S. Kesavan, F. Pacella, Symmetry of positive solutions of a quasilinear elliptic equation via isoperimetric inequalities, Appl. Anal. 54 (1994), 27--37.
	\bibitem{KV1986}
	S. Kichenassamy, L. V\'eron, Singular solutions of the $p$-Laplace equation, Math. Ann. 275 (1986), 599--615.
	\bibitem{LS1994}
	Y.Y. Li, I. Shafrir, Blow-up analysis for solutions of $-\Delta u=Ve^u$ in dimension two, Indiana Univ. Math. J. 43 (1994), 1255--1270.
	\bibitem{Lieberman1988}
	G.M. Lieberman, Boundary regularity for solutions of degenerate elliptic equations, Nonlinear Anal. 12 (11) (1988), 1203--1219.
	\bibitem{Lions1981}
	P.-L. Lions, Two geometrical properties of solutions of semilinear problems, Appl. Anal. 12 (1981), 267--272.
	\bibitem{Liouville1853}
	J. Liouville, Sur l'\'equation aux d\'eriv\'ees partielles $\partial^2\log\lambda/\partial u\partial v\pm2\lambda a^2=0$, J. Math. 18 (1853), 71--72.
	
	\bibitem{Loe}
 C. Loewner, On the conformal capacity in space, Journal of Mathematics and Mechanics 8 (1959), 411--414.
	
	\bibitem{Poggesi2019}
	G. Poggesi, Radial symmetry for $p$-harmonic functions in exterior and punctured domains, Appl. Anal. 98 (2019), 1785--1798.
	\bibitem{PPS2007}
	P. Pol\'a\v{c}ik, P. Quittner, P. Souplet, Singularity and decay estimates in superlinear problems via Liouville-type theorems. I. Elliptic equations and systems, Duke Math. J. 139 (2007), 555--579. 
	\bibitem{PPS2007p}
	P. Pol\'a\v{c}ik, P. Quittner, P. Souplet, Singularity and decay estimates in superlinear problems via Liouville-type theorems. II. Parabolic equations, Indiana Univ. Math. J. 56 (2007), 879--908. 
	\bibitem{PT2001}
	J. Prajapat, G. Tarantello, On a class of elliptic problems in $\mathbb{R}^2$: symmetry and uniqueness results, Proc. Roy. Soc. Edinburgh Sect. A 131 (2001) 967--985.
	\bibitem{RW1995}
	X.F. Ren, J.C. Wei, Counting peaks of solutions to some quasilinear elliptic equations with large exponents, J. Differential Equations 117 (1995), 28--55.
	\bibitem{Sci2016}
	B. Sciunzi, Classification of positive $D^{1,p}(\mathbb{R}^n)$-solutions to the critical $p$-Laplace equation in $\mathbb{R}^n$, Adv. Math. 291 (2016), 12--23.
	\bibitem{Ser2013}
	J. Serra, Radial symmetry of solutions to diffusion equation with discontinuous nonlinearities, J. Differential Equations 254 (2013), 1893--1902.
	\bibitem{Serrin1964}
	J. Serrin, Local behavior of solutions of quasi-linear equations, Acta Math. 111 (1964), 247--302.
	\bibitem{Serrin1965-1}
	J. Serrin, Isolated singularities of solutions of quasi-linear equations, Acta Math. 113 (1965), 219--240.
	\bibitem{Serrin1965}
	J. Serrin, Singularities of solutions of nonlinear equations. Proc. Sympos. Appl. Math., Vol. XVII pp. 68--88 Amer. Math. Soc., Providence, RI (1965) 
	\bibitem{Taylor1978}
	J. Taylor, Crystalline variational problems, Bull. Amer. Math. Soc. 84 (1978), 568--588.
	\bibitem{Tolksdorf1984}
	P. Tolksdorf, Regularity for more general class of quasilinear elliptic equations, J. Differential Equations 51 (1984), 126--150.
	\bibitem{Vet2016}
	J. V\'etois, A priori estimates and application to the symmetry of solutions for critical $p$-Laplace equations, J. Differential Equations 260 (2016), 149--161.
	\bibitem{WX2011}
	G. Wang, C. Xia, A characterization of the Wulff shape by an overdetermined anisotropic PDE, Arch. Ration. Mech. Anal. 199 (2011), 99--115.
	\bibitem{WX2012}
	G. Wang, C. Xia, Blow-up analysis of a Finsler-Liouville equation in two dimensions, J. Differential Equations 252 (2012), 1668--1700.
	\bibitem{Wein1971}
	H. Weinberger, Remark on the preceeding paper of Serrin, Arch. Ration. Mech. Anal. 43 (1971), 319--320.
	\bibitem{Wulff1901}
	G. Wulff, Zur Frage der geschwindigkeit des wachstums und der aufl\"osung der kristallfl\"aschen, Z. Kristallogr. 34 (1901), 449--530.
	\bibitem{XY2021}
	C. Xia, J.B. Yin, Two overdetermined problems for anisotropic $p$-Laplacian, Mathematics in Engineering. 4 (2022), 1--18.
	\bibitem{XG2016}
	R.L. Xie, H.J. Gong, A priori estimates and blow-up behavior for solutions of $-Q_N u=Ve^u$ in bounded domain in $\mathbb{R}^N$, Sci. China Math. 59 (2016), 479--492. 
	\bibitem{ZZ2019}
	C.L. Zhou, C.Q. Zhou, Moser--Trudinger inequality involving the anisotropic Dirichlet norm $\left(\int_{\Omega}F^N(\nabla u)\,dx\right)^{\frac{1}{N}}$ on $W_0^{1,N}(\Omega)$, J. Funct. Anal. 276 (2019), 2901--2935. 

\end{thebibliography}
\end{document}